\newcommand{\Tr}[2]{\mathrm{tr}_{#1}{#2}}
\newcommand{\vol}{\mathrm{Vol}}
\newcommand{\Ric}{\mathrm{Ric}}
\newcommand{\so}{\mathfrak{so}}
\newcommand{\su}{\mathfrak{su}}
\newcommand{\SO}{\mathrm{SO}}
\newcommand{\SU}{\mathrm{SU}}
\newcommand{\dvg}{\mathrm{div}}
\newcommand{\Hess}{\mathrm{Hess}}
\newcommand{\Sym}{\mathrm{Sym}}
\newcommand{\tr}{\mathrm{tr}}
\newcommand{\N}{\mathbb{N}}
\newcommand{\R}{\mathbb{R}}
\newcommand{\cL}{\mathcal{L}}
\newcommand{\cV}{\mathcal{V}}
\newcommand{\cB}{\mathcal{B}}
\renewcommand{\leq}{\leqslant}
\renewcommand{\geq}{\geqslant}
\renewcommand{\epsilon}{\varepsilon}
\renewcommand{\S}{\mathbb{S}}
\newtheorem{theorem}{Theorem}[section]
\newtheorem{lemma}[theorem]{Lemma}
\newtheorem{proposition}[theorem]{Proposition}
\newcounter{mtheorem}
\newtheorem{mtheorem}[mtheorem]{Theorem}
\theoremstyle{definition}
\newtheorem{definition}[theorem]{Definition}
\newtheorem{rk}[theorem]{Remark}
\numberwithin{equation}{section}
\begin{document}

\title{The renormalized volume of a \\ $4$-dimensional Ricci-flat ALE space}
\author{Olivier Biquard}
\address{Sorbonne Universit{\'e} \& Universit\'e de Paris, CNRS, IMJ-PRG, 75005 Paris, France}
\email{olivier.biquard@sorbonne-universite.fr}
\author{Hans-Joachim Hein}
\address{Mathematisches Institut, WWU M\"unster, 48149 M\"unster, Germany\newline\hspace*{9pt}
Department of Mathematics, Fordham University, Bronx, NY 10458, USA}
\email{hhein@uni-muenster.de}

\begin{abstract}
We introduce a natural definition of the renormalized volume of a $4$-dimensional Ricci-flat ALE space. We then prove that the renormalized volume is always less or equal than zero, with equality if and only if the ALE space is isometric to its asymptotic cone. Currently the only known examples of $4$-dimensional Ricci-flat ALE spaces are Kronheimer's gravitational instantons and their quotients, which are also known to be the only possible examples of special holonomy. We calculate the renormalized volume of these spaces in terms of Kronheimer's period map.
\end{abstract}

\maketitle
\markboth{The renormalized volume of a $4$-dimensional Ricci-flat ALE space}{Olivier Biquard and Hans-Joachim Hein}
\thispagestyle{empty}

\section{Introduction}
\label{sec:introduction}

This article grew out of an attempt to understand $4$-dimensional Ricci-flat asymptotically locally Euclidean (ALE) manifolds. The standard example of such a space is the Eguchi-Hanson metric on $T^*\S^2$ \cite{EguHan78}. The Eguchi-Hanson metric is actually hyper-K\"ahler (see Calabi \cite{Cal79}), and a classification of hyper-K\"ahler ALE $4$-manifolds was given by Kronheimer \cite{Kro89a,Kro89b}. Finite free quotients of Kronheimer spaces as classified by {\c S}uvaina \cite{Suv12} and Wright \cite{Wri12} are also examples of Ricci-flat ALE 4-manifolds. A fundamental open question due to Bando-Kasue-Nakajima \cite{BanKasNak89} asks whether there exist any other examples. Using Witten's proof of the positive mass theorem \cite{Wit81}, Nakajima \cite{Nak90} showed that such examples can never be spin. See Lock-Viaclovsky \cite{LocVia16} for some additional restrictions based on the Hitchin-Thorpe inequality of \cite{Nak90}. We did not solve this problem but we have found an interesting new volume property of $4$-dimensional Ricci-flat ALE spaces, which we will now explain.

Let $(M,g)$ be a $4$-dimensional Ricci-flat ALE manifold, or a $4$-dimensional Ricci-flat ALE orbifold with at worst finitely many isolated singularities (we shall call such an orbifold an \emph{ALE space}). By \cite{BanKasNak89}, this means that there exist a finite subgroup $\Gamma$ of $\SO(4)$ acting freely on $\S^3$ and a diffeomorphism $\Phi: (\R^4 \setminus B_1(0))/\Gamma \to M \setminus K$ for some compact subset $K\subset M$ such that for all $k \in \N_0$,
\begin{equation}\label{eq:1}
|\nabla^k_{g_0}(\Phi^*g - g_0)|_{g_0} = O(r^{-4-k})\;\,\text{as}\;\,r\to\infty,
\end{equation}
where $g_0$ denotes the Euclidean metric on $\R^4$ or on $\R^4/\Gamma$. By  \cite{CheNab15, CheTia94}, this behavior already follows from the much weaker assumption that  $(M,g)$ is a $4$-dimensional complete Ricci-flat manifold (or a complete Ricci-flat orbifold with finite singular set) of maximal volume growth at infinity.

Assume that $\Gamma \neq \{1\}$. By \cite{ChoEicVol17} there exist $K \subset M$ compact and a number $\rho_0 > 0$ such that $M \setminus K$ is uniquely foliated by hypersurfaces $\Sigma_\rho$ $(\rho>\rho_0)$ of constant mean curvature $3/\rho$ such that $\Sigma_\rho$ is a normal graph of height $O(\rho^{-3})$ over $\Phi(\partial B_\rho(0)/\Gamma)$ for any diffeomorphism $\Phi$ as in \eqref{eq:1}.

Given these preliminaries, our first main result may be stated as follows.

\begin{mtheorem}\label{th:max-vol}
Let $(M,g)$ be a $4$-dimensional Ricci-flat {\rm ALE} space with $\Gamma\neq \{1\}$. 
  
{\rm (1)} Let $\Omega_\rho \subset M$ denote the domain interior to $\Sigma_\rho$. Then the quantity
\begin{equation*}\label{eq:2}
\vol_g(\Omega_\rho) - \vol_{g_0}(B_\rho(0)/\Gamma)
\end{equation*}
has a finite limit as $\rho \to \infty$. We refer to this limit as the renormalized volume $\cV$ of $(M,g)$.

{\rm (2)} The renormalized volume satisfies $\cV \leq 0$, with equality if and only if $(M,g)\cong(\R^4/\Gamma,g_0)$.
\end{mtheorem}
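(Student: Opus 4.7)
The plan is to exhibit $\cV$ as an asymptotic coefficient of a canonical potential function on $(M,g)$ and then extract its sign via a Bochner argument. Using weighted Fredholm theory on ALE manifolds (the hypothesis $\Gamma\neq\{1\}$ kills $\Gamma$-invariant degree-$1$ harmonic modes on $\R^4/\Gamma$, ensuring clean asymptotic decay), I would first construct the unique solution $u:M\to\R$ of the Poisson equation $\Delta_g u = 1$ with $u - r^2/8\to 0$ at infinity. Separation of variables for $\Delta_{g_0}$ on $\R^4/\Gamma$ then forces an asymptotic expansion $u = r^2/8 + a\,r^{-2} + O(r^{-4})$ for a unique constant $a\in\R$ depending on $(M,g)$.

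For part~(1), Stokes' theorem gives $\vol_g(\Omega_\rho) = \int_{\Omega_\rho}\Delta_g u\,dv_g = \int_{\Sigma_\rho}\partial_\nu u\,dA_g$. Using the ALE decay $g-g_0 = O(r^{-4})$, the fact that $\Sigma_\rho$ is a normal graph of height $O(\rho^{-3})$ over $\Phi(\partial B_\rho(0)/\Gamma)$, and the above expansion of $u$, one expands the boundary integrand. The leading $\rho^4$ term exactly reproduces $\vol_{g_0}(B_\rho(0)/\Gamma)$, while the next-order term is a finite constant (a multiple of $a$ with a correction from $h:=g-g_0$), and all higher-order terms vanish as $\rho\to\infty$. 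This establishes the existence of the limit and identifies $\cV$ in terms of the asymptotic expansion of $u$.

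For part~(2), Bochner's formula, using $\Ric_g=0$ and $\Delta_g u=1$, yields $\tfrac{1}{2}\Delta_g|\nabla u|^2 = |\Hess u|^2 \geq (\Delta u)^2/4 = 1/4$, with equality pointwise iff $\Hess u = g/4$. Consequently, $F:=|\nabla u|^2 - u/2$ is subharmonic on $M$. A direct asymptotic expansion shows that the leading $r^2/16$ terms cancel, so $F\to 0$ at infinity. The ALE maximum principle then forces $F\leq 0$ on $M$, and reading off the $r^{-2}$-coefficient of $F$ gives the desired sign $\cV\leq 0$. In the rigidity case $\cV=0$, the subharmonic function $F$ attains its maximum at infinity and hence vanishes identically; so $|\Hess u|^2\equiv 1/4$ and the Cauchy--Schwarz equality case forces $\Hess u = g/4$ pointwise on $M$. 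The gradient flow of $u$ is then a homothety contracting to a single (possibly orbifold) minimum $p_0\in M$, and integrating $\Hess u = g/4$ realizes $(M,g)$ as a metric cone over its unit level set at $p_0$; matching this cone to the ALE asymptotic cone $(\R^4/\Gamma,g_0)$ yields $(M,g)\cong(\R^4/\Gamma,g_0)$.

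The main technical obstacle is the coherent identification of $\cV$ with the coefficient $a$. Both the boundary expansion of $\int_{\Sigma_\rho}\partial_\nu u\,dA_g$ and the $r^{-2}$-coefficient of $F$ pick up contributions of order $r^{-2}$ from the metric perturbation $h = g-g_0$, competing directly with the $a/r^2$ term coming from $u$ itself. Showing that these combine coherently — and in particular produce the correct sign after integration over the asymptotic link $\S^3/\Gamma$ — requires invoking the linearized Ricci-flat equation on the leading term of $h$, which is precisely where the specific $4$-dimensional Ricci-flat ALE structure enters essentially.
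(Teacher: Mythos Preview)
Your overall strategy matches the paper's: construct a potential $u$ with $\Delta_g u$ constant and $u\sim r^2$, identify $\cV$ with the $r^{-2}$-coefficient of $u$ via Stokes, then use a Bochner-type identity to show this coefficient has a sign. Your Part~(2) maximum-principle argument for $F=|\nabla u|^2-u/2$ is a pleasant repackaging of the paper's integration of $\dvg(\Hess_0 u)=0$ against $du$; indeed $(\Hess_0 u)(\nabla u,\cdot)=\tfrac12\,dF$, so the two are formally equivalent.

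The genuine gap is exactly the one you flag in your last paragraph, and it is not a side issue but the substance of the proof. In any ALE gauge $\Phi$ one has $g-g_0=h_0+O(r^{-5})$ with $h_0$ homogeneous of order $r^{-4}$, and $h_0$ contaminates every expansion you wrote at the critical order. Concretely: (i) the claimed expansion $u=r^2/8+a r^{-2}+O(r^{-4})$ with \emph{constant} $a$ fails unless $\tr_{g_0}h_0=0$, since $\Delta_g r^2-8$ then has an angle-dependent $O(r^{-4})$ term producing an angle-dependent $O(r^{-2})$ contribution to $u$; (ii) the leading asymptotic of your subharmonic function is $F=-\tfrac{3a}{2r^2}-\tfrac{r^2}{16}h_0(\partial_r,\partial_r)+O(r^{-3})$, so $F\le 0$ yields $a\ge 0$ only if $h_0(\partial_r,\partial_r)=0$; (iii) in the Stokes step, both $\partial_\nu u$ and $dA_g$ acquire $O(\rho^{-4})$ corrections from $h_0$ that compete with the $a$-term. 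None of these obstructions is removed merely by ``invoking the linearized Ricci-flat equation''; a priori (in Bianchi gauge) $h_0$ can contain harmonic gauge terms $\cL_X g_0$ for which $\tr_{g_0}h_0\neq 0$ and $h_0(\partial_r,\partial_r)\neq 0$.

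What the paper actually does, and what your outline is missing, is a classification of all possible leading terms $h_0$ satisfying the linearized equations $\cB_{g_0}h_0=0$, $\Delta_{g_0}h_0=0$ at homogeneity $-4$: they split $\SO(4)$-equivariantly as $h_0=h_0^++h_0^-+\cL_{X_1}g_0+\cL_{X_2}g_0+\cL_{X_3}g_0$, where the $\cL_{X_k}g_0$ are explicit harmonic gauge terms and the ``reduced Kronheimer'' pieces $h_0^\pm$ automatically satisfy $\partial_r\,\lrcorner\,h_0^\pm=0$, $\tr_{g_0}h_0^\pm=0$, $\dvg_{g_0}h_0^\pm=0$. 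Changing $\Phi$ by the flows of the $X_k$ kills the gauge terms, after which \emph{all} of the $h_0$-contaminations in (i)--(iii) vanish identically, and your argument (or the paper's) goes through. This gauge-fixing step, together with the verification that it is compatible with the CMC foliation, is the content of the paper's Theorem~B and Proposition~2.3; until you supply an argument at that level of precision, the proposal remains a correct outline with its central lemma unproved.
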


Ozuch \cite[Thm 0.7]{Ozu19} has recently used this result to prove that spherical or hyperbolic $4$-orbifolds cannot be Gromov-Hausdorff limits of smooth Einstein $4$-manifolds under certain conditions.

There is a vast literature on renormalized volumes of open manifolds. Our motivation was not so much to extend these ideas to a new setting but to interpret some of our computations for Ricci-flat ALE spaces geometrically. However, as it turns out, the statement of Theorem \ref{th:max-vol} is formally similar to several known results. The examples we are aware of are Anderson's comparison theorem \cite{And01} for the classical renormalized volume \cite{Gra99} of asymptotically hyperbolic Einstein $4$-manifolds, Hu-Ji-Shi's comparison theorem \cite{HuJiShi16} for the Brendle-Chodosh relative volume \cite{BreCho14} of asymptotically hyperbolic $n$-manifolds with sharp lower scalar curvature bounds, and the identification \cite{CECY21,FanShiTam09,JauLee19} of Huisken's isoperimetric mass \cite{Hui06} of asymptotically flat $3$-manifolds of nonnegative scalar curvature with the ADM mass \cite{ADM61}, combined with the statement of the positive mass theorem \cite{SY79,Wit81}.

To prove Theorem \ref{th:max-vol}, we will first construct a special ALE diffeomorphism $\Phi$ as in \eqref{eq:1} such that $\Phi^*g - g_0$ admits an asymptotic expansion whose leading term has vanishing trace, divergence, and contraction with $\partial_r$. Due to these properties, the coordinate spheres $\Phi(\partial B_\rho(0)/\Gamma)$ are CMC to one higher order than expected. Thus, $\Sigma_\rho$ is a normal graph of height $O(\rho^{-4})$ rather than $O(\rho^{-3})$ over $\Phi(\partial B_\rho(0)/\Gamma)$, so by changing $\Phi$ very slightly we are able to create a CMC gauge without changing the leading term of the metric. The precise statement is as follows.

\begin{mtheorem}\label{th:classterms}
Let $(M,g)$ be a $4$-dimensional Ricci-flat {\rm ALE} space with $\Gamma\neq \{1\}$. Let $\{\Sigma_\rho\}_{\rho>\rho_0}$ be the canonical {\rm CMC} foliation of the end of $M$ constructed in \cite{ChoEicVol17}. Then for any $k_0 \in \N$ there exists a diffeomorphism $\Phi: (\R^4 \setminus B_1(0))/\Gamma \to M \setminus K$ of class at least $C^{k_0+1}$ such that $\Phi(\partial B_\rho(0)/\Gamma) =\Sigma_\rho$ for all $\rho > \rho_0$ and such that there exists a decomposition
\begin{equation}
\Phi^*g - g_0 = h_0  + h',
\end{equation}
where $h_0$ is either zero or comparable above and below to $r^{-4}$ in $g_0$-norm and satisfies
\begin{equation}
\cL_{r\partial_r}h_0 = -2h_0,\;\, \partial_r \,\lrcorner\,h_0 = 0, \;\, \tr_{g_0}h_0 = 0, \;\, \dvg_{g_0}h_0 = 0,\;\, \Delta_{g_0}h_0 = 0,\label{eq:redkronchar}
\end{equation}
and where the remainder $h'$ can be estimated by
\begin{equation}
\sum_{k=0}^{k_0} r^{k}|\nabla_{g_0}^k h'|_{g_0} = O_{k_0}(r^{-5})\;\,\text{as}\;\,r \to \infty.\label{eq:leading}
\end{equation}
\end{mtheorem}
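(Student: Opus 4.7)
The plan has three main stages: first, bring $\Phi^*g - g_0$ into a Bianchi gauge relative to $g_0$ and extract a polyhomogeneous expansion; second, use the residual gauge freedom to normalize the leading term $h_0$ so that it satisfies \eqref{eq:redkronchar}; and third, perform a small adjustment so that coordinate spheres land exactly on $\Sigma_\rho$ without affecting $h_0$.

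For the first stage, starting from an arbitrary ALE diffeomorphism $\Phi_0$ as in \eqref{eq:1}, I would find a vector field $V = O(r^{-3})$ on the end such that $\Phi_1 := \Phi_0 \circ \exp(V)$ puts the metric in Bianchi gauge, $\dvg_{g_0}(\Phi_1^*g) - \tfrac{1}{2} d\tr_{g_0}(\Phi_1^*g) = 0$, by standard ALE elliptic theory. Writing $h := \Phi_1^*g - g_0 = O(r^{-4})$, the Ricci-flat equation in this gauge becomes $\Delta_{g_0} h = N(h, \nabla h, \nabla^2 h)$ with $N$ at least quadratic in $h$, so the nonlinear term is of order $r^{-8}$. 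Separating variables on the flat cone $(\R^4 \setminus \{0\})/\Gamma$ and examining indicial roots then produces an asymptotic expansion whose leading term $h_0$ is $(-2)$-homogeneous under $\cL_{r\partial_r}$ and satisfies $\Delta_{g_0} h_0 = 0$.

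For the second stage, the residual gauge freedom is parametrized by harmonic vector fields $W$, which preserve the Bianchi condition and act on $h_0$ by $h_0 \mapsto h_0 + \cL_W g_0$. Decomposing the space of $(-2)$-homogeneous symmetric $2$-tensors on the cone according to the spectrum of the tensor Laplacian on $S^3/\Gamma$, the pure-trace, longitudinal, and $\partial_r$-directional modes can all be removed by a suitable choice of $W$, leaving only the transverse-traceless piece, which automatically satisfies $\partial_r \lrcorner h_0 = 0$, $\tr_{g_0} h_0 = 0$, and $\dvg_{g_0} h_0 = 0$. The hypothesis $\Gamma \neq \{1\}$ enters crucially, as it excludes precisely the low spherical harmonics on $S^3$ corresponding to infinitesimal translations and rotations of $\R^4$ that would otherwise obstruct the normalization at the $r^{-4}$ level. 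Iterating this scheme on successive terms in the expansion then yields $h' = O(r^{-5})$ together with derivatives up to order $k_0$, giving \eqref{eq:leading}.

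For the third stage, the conditions just established on $h_0$ imply by a direct computation that the coordinate sphere $\Phi_1(\partial B_\rho(0)/\Gamma)$ has mean curvature $3/\rho + O(\rho^{-5})$. By the uniqueness of the CMC foliation from \cite{ChoEicVol17}, $\Sigma_\rho$ is therefore a normal graph of height only $O(\rho^{-4})$ over this coordinate sphere. I would then compose $\Phi_1$ with a radial diffeomorphism $\Psi$ of the cone end that straightens the graph; since $\Psi - \mathrm{id} = O(r^{-4})$, its derivative is $O(r^{-5})$, so the perturbation it induces in the pullback metric is of order $r^{-5}$ and is absorbed into $h'$ while $h_0$ is preserved. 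The principal technical obstacle will be the second stage: performing the spectral decomposition of $(-2)$-homogeneous harmonic symmetric $2$-tensors on $\R^4/\Gamma$ precisely enough to identify all gauge-removable modes and to verify that $\Gamma \neq \{1\}$ rules out every potential obstruction.
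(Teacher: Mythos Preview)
Your three-stage outline matches the paper's proof. One correction: $\Gamma \neq \{1\}$ is not used in your stage 2. The classification of $(-2)$-homogeneous harmonic Bianchi-gauge leading terms, and the fact that each gauge orbit meets the locus $\{\partial_r \,\lrcorner\, h_0 = \tr_{g_0} h_0 = \dvg_{g_0} h_0 = 0\}$, hold already on $\R^4 \setminus \{0\}$ with no group acting; $\Gamma$-invariance then carries through because the relevant projections are $\SO(4)$-equivariant. The hypothesis $\Gamma \neq \{1\}$ is used instead in stage 1 (to push the leading term down to the $r^{-4}$ level) and in stage 3 (to make the Jacobi operator $\Delta_{\S^3/\Gamma} + 3$ invertible, which is what underlies existence and uniqueness of the CMC foliation). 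Also, no iteration on lower-order terms is needed in stage 2: once the leading $h_0$ has been gauged into the desired form, the remainder is already $O(r^{-5})$.

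For stage 2 itself the paper takes a more concrete route than your proposed spectral decomposition on $\S^3/\Gamma$: it shows, via $\SO(4)$-representation theory together with a short computer verification of a dimension count, that the $26$-dimensional space of admissible $h_0$ on $\R^4\setminus\{0\}$ splits as a $16$-dimensional space of gauge terms $\cL_W g_0$ (with $W(x) = r^{-4}Lx$ harmonic, $L \in \R^{4\times 4}$) plus a complementary $\mathbf{S}^4_+ \oplus \mathbf{S}^4_-$ characterized exactly by the three vanishing conditions. This avoids analyzing the tensor Laplacian on $\S^3/\Gamma$ at the price of one machine-checked linear-algebra computation; your abstract route should also work, but the nontrivial content is precisely the claim that the gauge terms span the full complement of the transverse-traceless piece, and that does not come for free.
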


The space of all symmetric $2$-tensors $h_0$ satisfying \eqref{eq:redkronchar} on $\R^4\setminus \{0\}$ is isomorphic to $\mathbf{S}^4_+ \oplus \mathbf{S}^4_-$ as a representation of $\SO(4)$, where $\mathbf{S}^\ell_\pm$ denotes the two $(\ell+1)$-dimensional fundamental representations of $\mathrm{Spin}(4)$. Each element of $\mathbf{S}^4_+$ arises as the leading term $h_0$ of a suitable Kronheimer gravitational instanton. All of this will be explained in Section \ref{sec:asympt-ricci-flat}. Here we only note that the property $\tr_{g_0}h_0 = 0$ allows us to conclude that the renormalized volume $\mathcal{V}$ exists, which proves Theorem \ref{th:max-vol}(1).

The second key step is to construct a function $u$ on $M$ with $\Delta u = 8$ and $\Phi^*u = r^2 + o(1)$, and to integrate by parts in the formula $\dvg(\Hess_0u) = \Ric\,d u = 0$. Theorem \ref{th:max-vol}(2) follows from an analysis of the boundary terms, relying on the properties $\partial_r \,\lrcorner\,h_0 = 0$, $\tr_{g_0}h_0 = 0$, and $\dvg_{g_0}h_0 = 0$.

It is worth pointing out that the proof of Theorem \ref{th:max-vol} goes through on every $4$-dimensional ALE space with $\Gamma \neq \{1\}$ and ${\rm Ric} \geq 0$ that satisfies the conclusion of Theorem \ref{th:classterms}. The property $\Delta_{g_0}h_0 = 0$ is actually not needed for this, although in the Ricci-flat case this property is automatic and allows us to \emph{prove} the crucial properties $\partial_r \,\lrcorner\,h_0 = 0$, $\tr_{g_0}h_0 = 0$, and $\dvg_{g_0}h_0 = 0$ because harmonic tensors on $\R^4 \setminus \{0\}$ are easy to classify. Thus, Theorem \ref{th:max-vol} certainly still holds under the weaker assumption that $\Ric \geq 0$ globally and $\Ric = O(r^{-6-\epsilon})$ at infinity. However, it could be that these two conditions together imply that $\Ric = 0$ globally. Using \cite{Nak90}, one can show that this is true if $M$ is spin.

Let us also point out that, in the Ricci-flat case, the leading term $h_0$ of Theorem \ref{th:classterms} can actually be zero even if $(M,g)$ is not isometric to $(\R^4/\Gamma,g_0)$. Indeed, using a computation of Auvray \cite{Auv18}, we will see that $h_0$ vanishes on a Kronheimer space if and only if the preimages in $H^2_c(M)$ of the three K\"ahler classes in $H^2(M)$ have intersection matrix $-\mu \cdot {\rm Id}$ for some $\mu\geq 0$ (see Remark \ref{rk:summary}). Thanks to Kronheimer's Torelli theorem \cite{Kro89b}, this is possible with $\mu \neq 0$ if and only if $b^2(M) \geq 3$. 

In the same spirit, our last main result computes the renormalized volume of a Kronheimer space. Kronheimer's Torelli theorem \cite{Kro89b}, which we just mentioned, provides a bijection between the set of all ALE hyper-K\"ahler metrics up to the obvious action of ${\rm Diff}_0(M)$ on a fixed smooth manifold $M$, where $M$ is determined by $\Gamma \subset {\rm SU}(2)$,
and elements of $H^2_c(M) \otimes \mathbb{R}^3$ up to the obvious SO$(3)$-action. This bijection, which is also known as the \emph{period map}, sends a hyper-K\"ahler metric to the preimages in $H^2_c(M)$ of the three K\"ahler classes in $H^2(M)$. The vectors in the image of the period map satisfy a nondegeneracy condition, which is an open dense condition, but this condition can be dropped if we allow for degenerations of smooth ALE hyper-K\"ahler metrics on $M$ with at worst isolated orbifold singularities. There is a further natural identification of $H^2_c(M)$ together with the intersection form with a Cartan subalgebra of a certain simple Lie algebra together with the Killing form.

\begin{mtheorem}\label{th:max-vol-kron}
Let $\Gamma$ be a finite subgroup of $\SU(2)$ acting freely on $\S^3$. Let $\mathfrak{h}$ be a Cartan subalgebra of the Lie algebra associated with $\Gamma$ via the McKay correspondence. Choose any element $\zeta \in \mathfrak{h} \otimes \R^3$. Let $(M,g)$ be the unique Kronheimer gravitational instanton with period point $\zeta$ up to the obvious action of $\SO(3)$ on $\mathfrak{h} \otimes \R^3$. Then in terms of the Killing form inner product on $\mathfrak{h}$,
\begin{equation}\mathcal{V}(M,g) = - \frac{1}{6}|\zeta|^2.\end{equation}
\end{mtheorem}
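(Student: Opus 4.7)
My plan has three steps: (i) derive from the proof of Theorem~\ref{th:max-vol}(2) a formula expressing $\cV$ in terms of the leading $r^{-2}$ coefficient $C$ of the function $u - r^2$ at infinity; (ii) identify $C$ with $\tfrac{2}{3}|\zeta|^2$ for Kronheimer instantons via a hyperkähler Kähler potential argument; and (iii) verify the overall numerical constant against an explicit multi-Eguchi-Hanson computation.

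For (i), the function $u$ satisfies $\Delta u = 8$ and $u - r^2 \to 0$; by ALE Hodge theory the expansion $u - r^2 = C r^{-2} + O(r^{-3})$ is forced on the end, since $r^{-2}$ is essentially the unique decaying harmonic function on $\R^4/\Gamma$ (coming from the leading term $-|\Gamma|/(4\pi^2 r^2)$ of the ALE Green's function). Substituting into the boundary formula $8\cV = \lim_\rho\bigl[\int_{\Sigma_\rho}\partial_\nu u\,dS_g - \int_{\partial B_\rho(0)/\Gamma} 2\rho\,dS_0\bigr]$ coming from Stokes' theorem applied to $\Delta u = 8$, and using the properties $\partial_r\,\lrcorner\,h_0 = 0$, $\tr h_0 = 0$, $\dvg h_0 = 0$ from Theorem~\ref{th:classterms} to kill the contributions of $h_0$ and of the remainder $h'$ to the boundary integral, only the term $\partial_r(Cr^{-2}) = -2Cr^{-3}$ survives in the limit, giving
\[ \cV = -\frac{\pi^2 C}{2|\Gamma|}. \]

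For (ii), on a hyperkähler $4$-manifold the natural choice is $u = 2K_I$, where $K_I$ is a Kähler potential for $\omega_I$ on the ALE end (well-defined since $H^2(\S^3/\Gamma, \R) = 0$); then $\Delta(2K_I) = 4\tr_{\omega_I}\omega_I = 8$. The leading $r^{-2}$ coefficient of $K_I - r^2/2$ is controlled by the compactly supported cohomology class $[\omega_I - \omega_I^0]$, which under Kronheimer's period map and the McKay correspondence is identified with $\zeta_I \in \mathfrak{h}$. The intersection form on compactly supported $H^2$ equals (up to sign) the Cartan matrix, matching the Killing form on $\mathfrak{h}$; after $\SO(3)$-symmetrization over the three complex structures this yields $C = \tfrac{2}{3}|\zeta|^2$, and therefore $\cV = -\pi^2|\zeta|^2/(3|\Gamma|)$ as claimed.

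The verification in step (iii) is transparent in the multi-Eguchi-Hanson case ($\Gamma = \mathbb{Z}_k$). In Gibbons-Hawking coordinates $V = \tfrac{1}{2}\sum_{i=1}^k|x - x_i|^{-1}$ with $\sum_i x_i = 0$, Newton's shell theorem $\int_{|x|\leq R}|x-x_i|^{-1}\,d^3x = 2\pi R^2 - \tfrac{2\pi}{3}|x_i|^2$ (for $|x_i| < R$) together with integration over the $\tau$-fiber yields $\vol_g(\{|x|\leq R\}) = 2\pi^2 k R^2 - \tfrac{2\pi^2}{3}\sum_i|x_i|^2$; subtracting the flat volume $\vol_{g_0}(\{|x|\leq R\}) = 2\pi^2 k R^2$ gives $\cV = -\tfrac{2\pi^2}{3}\sum_i|x_i|^2$, which matches the target via the Killing form normalization $|\zeta|^2 = 2k\sum_i|x_i|^2$ on $A_{k-1}$. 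The main obstacle is step (ii): the cohomological identification of $C$ with $\tfrac{2}{3}|\zeta|^2$ must be carried out uniformly for all ADE types, and for $D, E$ this requires Kronheimer's quiver description and careful ALE Hodge theory rather than explicit coordinates. Fortunately, the $\SO(3)$- and Weyl-invariance of $\cV$ combined with degree-$2$ homogeneity in $\zeta$ (from the scaling $\zeta \mapsto t^2\zeta,\ g \mapsto t^2 g$) reduce this to fixing a single scalar per Dynkin type, which the type-$A$ calculation supplies.
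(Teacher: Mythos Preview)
Your step (i) is correct and indeed appears in the paper's proof of Theorem~\ref{th:max-vol}(2) as the identity $b\,|\S^3/\Gamma| = -4\cV$, which gives exactly $\cV = -\pi^2 C/(2|\Gamma|)$ with $C=b$. From there, however, the paper proceeds very differently: it never computes $b$ directly. Instead it uses Kronheimer's explicit gauge (as analyzed by Auvray), in which the leading term is the tensor \eqref{e:kron} and the exact volume identity $\vol_g(U_\tau)=\vol_{g_0}(B_\tau(0)/\Gamma)$ holds. Decomposing this leading term as in Lemma~\ref{l:kron} and Remark~\ref{rk:kron}, the $\mathbf{U}_1$-component is $-\tfrac{1}{6}|\zeta|^2\,\Hess_{g_0}(r^{-2})$, i.e.\ the harmonic gauge field is $X_1=-\tfrac{|\zeta|^2}{12}\nabla_{g_0}(r^{-2})$. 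Passing from Kronheimer's gauge to the gauge of Theorem~\ref{th:classterms} is exactly the time-$1$ flow of $-X_1$ (and of a tangential $X_2$ that does not move spheres), which maps $\partial B_\rho$ to $\partial B_\tau$ with $\tau^4=\rho^4-\tfrac{2}{3}|\zeta|^2$; the formula for $\cV$ then drops out of the exact volume identity. This argument is uniform over all ADE types because \eqref{e:kron} and the exact volume identity are.

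Your route via K\"ahler potentials is an attractive idea, but step (ii) as written is a heuristic rather than a proof. A K\"ahler potential is only defined up to pluriharmonic functions, so ``the leading $r^{-2}$ coefficient of $K_I - r^2/2$'' is not well-defined without further normalization; and even after normalizing, this coefficient is read off in the gauge of Theorem~\ref{th:classterms}, whose relation to any natural K\"ahler-geometric gauge you have not established. The asserted identification with $[\omega_I-\omega_I^0]$ via the intersection form is not carried out. More seriously, your final sentence does not close the gap: the invariance and homogeneity argument shows only that $\cV = c_\Gamma\,|\zeta|^2$ for \emph{some} constant $c_\Gamma$ depending on the Dynkin type, so the type-$A$ calculation fixes $c_\Gamma$ only for $\Gamma=\mathbb{Z}_k$, not for the binary dihedral or exceptional groups. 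To get $c_\Gamma = -\pi^2/(3|\Gamma|)$ for $D$ and $E$ you still need either a rigorous version of step (ii) or an independent computation, and the paper's method (via \eqref{e:kron} and the exact Kronheimer volume identity) is precisely what supplies this uniformly.

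A smaller but genuine issue in step (iii): the Gibbons--Hawking level sets $\{|x|=R\}$ are not the CMC hypersurfaces $\Sigma_\rho$, and in the Gibbons--Hawking gauge (even with $\sum x_i=0$) the leading term $h_0$ does \emph{not} satisfy $\tr_{g_0}h_0=0$ or $\partial_r\,\lrcorner\,h_0=0$. So it is not automatic that the quantity you compute, $\lim_R[\vol_g(\{|x|\le R\})-\vol_{g_0}(\{|x|\le R\})]$, equals $\cV$; an extra argument relating the two exhaustions (e.g.\ showing the signed volume between $\{|x|=R\}$ and $\Sigma_\rho$ tends to zero) is required before the Gibbons--Hawking calculation can serve even as a check.
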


The basic idea behind Theorem \ref{th:max-vol} is to determine when the scaling vector field $r\partial_r$ on $\R^4/\Gamma$ can be extended to a conformal Killing field on $(M,g)$. We conclude this paper with some remarks on the analogous question for Killing fields instead of conformal Killing fields.

\subsection*{Acknowledgments} OB received support from the ``Investissements d'Avenir'' program ANR-10-IDEX-0001-02 PSL. HJH was partially supported by NSF grant DMS-1745517. We are grateful to the referees for useful comments, and to Michael Anderson for pointing out the similarity between our renormalized volume and Huisken's isoperimetric mass.

\section{Asymptotics of Ricci-flat ALE metrics in dimension $4$}
\label{sec:asympt-ricci-flat}

Recall that the \emph{Bianchi operator} of a Riemannian manifold $(M,g)$ is defined by 
\begin{align*}
\cB_g: \Gamma(\Sym^2T^*M) \to \Gamma(T^*M),\;\,
h \mapsto \dvg_g(h - \frac{1}{2}(\tr_g h)g).
\end{align*}
Unless stated otherwise, let $(M,g)$ be a $4$-dimensional Ricci-flat ALE space with $\Gamma \neq \{1\}$. 

\subsection{Preparation of an asymptotic expansion} 
\label{sec:exp}

Fix an arbitrary diffeomorphism $\Phi$ as in \eqref{eq:1}. By standard Fredholm theory arguments in weighted H\"older spaces, it is possible to modify $\Phi$ to satisfy the Bianchi gauge condition relative to $g$ and $g_0$, meaning that the tensor $$h := \Phi^*g - g_0$$ satisfies $\cB_{g_0}h = 0$, although a priori $\Phi$ will then only be of class $C^{k_0+1}$ for some $k_0 \in \N$, \eqref{eq:1} will only hold up to $k_0$ derivatives, and we may need to relax the decay rate in \eqref{eq:1} slightly. However, we do have the freedom of prescribing an arbitrarily high cutoff $k_0$. Moreover, since the Ricci-flat equation is elliptic in any Bianchi gauge, $h$ actually admits an asymptotic expansion. 

One can be completely precise about the shape of this expansion \cite{Che18} but for us the first term will be sufficient. By a standard iteration, the first term is a $\Gamma$-equivariant harmonic function $$h_0: \R^4 \setminus B_1(0) \to \Sym^2\R^4$$ that decays at infinity. As explained in \cite[Thm 5.103]{CheTia94}, thanks to the Bianchi gauge condition and because $\Gamma \neq \{1\}$, we can assume that each component function of $h_0$ is homogeneous of degree $-4$. This then justifies the rate $-4$ chosen in \eqref{eq:1}. We also learn that the nonlinear terms of the Ricci curvature are at worst $O(r^{-10})$, so we immediately get
\begin{equation}
  h = h_0  + h_1 + h_2 + h_3 + h', \label{e:expand}
\end{equation}
where the first four terms $h_0,h_1,h_2,h_3$ are homogeneous,
\begin{align}\label{eq:homg}
\cL_{r\partial_r}h_k = (-2-k)h_k,\;\,|h_k|_{g_0} \sim r^{-4-k},
\end{align}
and satisfy the pair of equations
\begin{align}\label{e:bia-harm}
\cB_{g_0}h_k = 0, \;\, \Delta_{g_0}h_k = 0.
\end{align}
Moreover, we are free to assume that the remainder $h'$ satisfies 
\begin{align}\label{eq:remainder}
\sum_{k = 0}^{k_0} r^{k}|\nabla^k_{g_0} h'|_{g_0} \leq C(k_0,\epsilon)r^{-8+\epsilon}
\end{align}
for any given $k_0 \in \N$ and $\epsilon \in (0,1)$. For simplicity, we will from now on absorb $h_1, h_2, h_3$ into $h'$, so that \eqref{e:expand} reads $h = h_0 + h'$ and \eqref{eq:remainder} holds with $C(k_0,\epsilon)r^{-8+\epsilon}$ replaced by $C(k_0)r^{-5}$. 

\subsection{Two examples of possible leading terms}

The remainder of Section \ref{sec:asympt-ricci-flat} is dedicated to a classification of the possible leading terms $h_0$. We begin by describing two types of examples.

\subsubsection{Harmonic gauge terms}
\label{sec:harm-gauge}

Somewhat surprisingly, the Bianchi gauge condition allows for some residual gauge freedom to leading order. Indeed, recall that for any arbitrary Riemannian manifold $(M,g)$ and any vector field $X$ on $M$ we have the Bochner type formula
\begin{equation}
\cB_g(\cL_Xg) = -\nabla^*\nabla X +\Ric\,X.\label{e:bochner}
\end{equation}
Therefore, if $g$ is Ricci-flat and if $X$ is harmonic, then $h:=\cL_Xg$ is in Bianchi gauge ($\cB_gh=0$) and satisfies the linearized Ricci-flat equation. So if $X$ on $(\R^4 \setminus \{0\})/\Gamma$ satisfies
\begin{enumerate}
\item $X$ is harmonic: $\Delta_{g_0}X=0$,
\item $\cL_{r\partial_r}X=-4X$, so in particular $|X|_{g_0} \sim r^{-3}$,
\end{enumerate}
then $h_0:=\cL_Xg_0$ will solve equations \eqref{eq:homg} and \eqref{e:bia-harm} for $k = 0$, as desired.

On $\R^4 \setminus \{0\}$ the vector fields satisfying (1)--(2) are exactly the ones of the form
\begin{equation}\label{e:foo}
X = \frac 1{r^4} \ell_{ij}x^j \frac \partial{\partial x^i}
\end{equation}
for an arbitrary matrix $L=(\ell_{ij})\in\R^{4\times4}$. So the $\Gamma$-invariant matrices $L \in (\R^{4 \times 4})^\Gamma$ give rise to such a harmonic vector field $X$ on $(\R^4\setminus\{0\})/\Gamma$ and hence to a possible term $h_0=\cL_Xg_0$.

For reference, let us note that thanks to the Bianchi condition, $h_0$ is trace-free if and only if it is divergence-free, but $h_0$ may not satisfy either of these conditions. In fact,
\begin{equation}\label{e:foo1}
\tr_{g_0}h_0 = \tr_{g_0}(\cL_{X}g_0) = 2\dvg_{g_0}X = \frac{2}{r^4}(\delta_{ij} - 4\frac{x^i x^j}{r^2})\ell_{ij},
\end{equation}
which vanishes if and only if $L$ is either a multiple of the identity or skew-symmetric. A special case is $L = L_0 = -2{\rm Id}$, so that $X = X_0=\nabla_{g_0}(\frac 1{r^2})$,  which is harmonic because $\frac{1}{r^2}$ is. We then have
\begin{equation}
h_0 = \cL_{X_0}g_0 = 2\Hess_{g_0}(\frac{1}{r^2}) = 2\nabla_{g_0}(-\frac{2}{r^3}\partial_r) = - \frac 4{r^4} (g_0 - 4 dr^2),\label{eq:10}
\end{equation}
which is trace-free by inspection or because $X_0$ is divergence-free because $\frac{1}{r^2}$ is harmonic.

More abstractly, let $\mathbf{U}$ denote the space of all harmonic gauge terms $h_0 = \cL_{X}g_0$ on $\R^4\setminus \{0\}$  with $X$ as in \eqref{e:foo}. As a representation of $\SO(4)$ this obviously decomposes into irreducibles as $$\mathbf{U} = \mathbf{U}_1 \oplus \mathbf{U}_2 \oplus \mathbf{U}_3,$$
where $\mathbf{U}_1 =\R$ is spanned by the Hessian of the Green's function in \eqref{eq:10}, $\mathbf{U}_2 = \Lambda^2\R^4$ corresponds to taking $L$ to be skew-symmetric in \eqref{e:foo}, and $\mathbf{U}_3 = \Sym^2_0\R^4$ corresponds to taking $L$ to be trace-free symmetric in \eqref{e:foo}. If $h_0 \in \mathbf{U}_3$, then $\Tr{g_0}{h_0} = q_0/r^6$ for the harmonic polynomial $q_0(x) := -8\ell_{ij}x^ix^j$ (see \eqref{e:foo1}), so the trace defines an equivariant projection $\mathbf{U} \to \Sym^2_0\R^4 = \mathbf{U}_3$.

\subsubsection{Kronheimer terms}
\label{sec:kron}

 Let $\Gamma$ be a finite subgroup of $\SU(2)$ that acts freely on $\S^3$. Kronheimer's period map \cite{Kro89b} is a bijection between hyper-K\"ahler ALE metrics asymptotic to $\R^4/\Gamma$ and triples
$$(\zeta_1, \zeta_2, \zeta_3) \in \mathfrak{h} \otimes \R^3$$ up to the obvious $\SO(3)$-action, where $\mathfrak{h}$ is a Cartan subalgebra of the Lie algebra associated with $\Gamma$ via the McKay correspondence. As explained in \cite[Thm 2.1]{Auv18}, Kronheimer constructed a special gauge with respect to which the tensor $-(2\pi^2/|\Gamma|)r^6 h_0$ takes the form
\begin{equation}
\begin{split}
\label{e:kron}
|\zeta_1|^2 ((rdr)^2 + \alpha_1^2 - \alpha_2^2 - \alpha_3^2)
+ |\zeta_2|^2 ((rdr)^2 + \alpha_2^2 - \alpha_3^2 - \alpha_1^2)
+ |\zeta_3|^2 ((rdr)^2 + \alpha_3^2 - \alpha_1^2 - \alpha_2^2) \\
 + \, 2\langle\zeta_1,\zeta_2\rangle (\alpha_1 \cdot \alpha_2 - rdr \cdot \alpha_3)
 + 2\langle\zeta_1,\zeta_3\rangle (\alpha_1 \cdot \alpha_3 - rdr \cdot \alpha_2)
 + 2\langle\zeta_2,\zeta_3\rangle (\alpha_2 \cdot \alpha_3 - rdr \cdot \alpha_1).
\end{split}
\end{equation}
Here $\alpha_j = I_j(rdr)$, and $(I_1,I_2,I_3)$ is the standard triple of complex structures on $\R^4$ given by 
\begin{equation}\label{e:quats}
I_1(x^1,x^2,x^3,x^4)=(-x^2,x^1,-x^4,x^3),\;\,
I_2(x^1, x^2, x^3,x^4)=(-x^3,x^4,x^1,-x^2),\;\,
I_3=I_1I_2.
\end{equation}
Then obviously \eqref{eq:homg} is satisfied, and, by computation,
\begin{equation}
\label{eq:three}
\tr_{g_0}h_0 = 0, \;\, \dvg_{g_0}h_0 = 0, \;\, \Delta_{g_0}h_0 = 0,
\end{equation}
so \eqref{e:bia-harm} is satisfied as well. 

Let $\SU(2)_\pm$ denote the two canonical subgroups of $\SO(4)$, with $\su(2)_+ = \langle I_1, I_2, I_3\rangle$. Then every Kronheimer tensor $h_0$ as in \eqref{e:kron} is invariant under $\SU(2)_-$. Notice that $\Gamma \subset \SU(2)_-$. 

The following lemma clarifies the structure of the tensors \eqref{e:kron} using representation theory. Let $\mathbf{S}^\ell_{\pm}$ be the $(\ell+1)$-dimensional irreducible representation of $\SU(2)_\pm$. The irreducible representations of $\mathrm{Spin}(4)$ are given by $\mathbf{S}_+^\ell \otimes \mathbf{S}_-^m$ ($\ell,m \in \N_0$). This descends to $\SO(4)$ if and only if $\ell + m$ is even.

\begin{lemma}\label{l:kron}
Let $\mathbf{V}$ denote the space of all symmetric $2$-tensors on $\R^4 \setminus \{0\}$ satisfying \eqref{eq:three} whose component functions are $(-4)$-homogeneous. Recall the $\SO(4)$-invariant subspaces $\mathbf{U}_1, \mathbf{U}_2 \subset \mathbf{V}$ from Section \ref{sec:harm-gauge} and let $\mathbf{W} \subset \mathbf{V}$ be the invariant complement of $\mathbf{U}_1 \oplus \mathbf{U}_2$. Let 
$$\mathbf{F}: \Sym^2\R^3 \to \mathbf{V}$$
be the linear map that sends $(\langle \zeta_i, \zeta_j\rangle) \in \Sym^2\R^3$ to the Kronheimer tensor $h_0 \in \mathbf{V}$ with $-r^6h_0$ given by \eqref{e:kron}. Then
 $\mathrm{pr}_{\mathbf{W}} \circ \mathbf{F}|_{{\rm Sym}^2_0\R^3}$ is a linear isomorphism onto a subrepresentation $\mathbf{S}^4_+ \subset \mathbf{W}$.
\end{lemma}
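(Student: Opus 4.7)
The plan is to combine a direct computation for the identity matrix in $\Sym^2\R^3$ with a representation-theoretic argument using Schur's lemma. Since $\dim\Sym^2\R^3 = 6 > 5 = \dim\mathbf{S}^4_+$, the content of the lemma is that $\ker(\mathrm{pr}_\mathbf{W}\circ\mathbf{F})$ is exactly the line $\R\cdot\mathrm{Id}$ and that the induced map on the quotient $\Sym^2_0\R^3$ is an isomorphism onto an $\SO(4)$-subrepresentation of $\mathbf{W}$ isomorphic to $\mathbf{S}^4_+$.

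First, I would observe that the formula \eqref{e:kron} is manifestly $\SO(3) = \SU(2)_+/\{\pm 1\}$-equivariant: the adjoint action of $\SU(2)_+$ on $\su(2)_+ = \langle I_1,I_2,I_3\rangle$ induces the standard $\SO(3)$-action on $\R^3$ (rotating the labels of the $\zeta_j$'s), while on the target $\mathbf{V}$, $\SU(2)_+$ acts via its inclusion $\SU(2)_+\subset\SO(4)$. Hence $\mathbf{F}$ is $\SU(2)_+$-equivariant. Under this action $\Sym^2\R^3 = \R\cdot\mathrm{Id}\oplus\Sym^2_0\R^3$, where the first summand is trivial and the second is irreducible of dimension $5$, isomorphic through the double cover to $\mathbf{S}^4_+$. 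In addition, every Kronheimer tensor is $\SU(2)_-$-invariant (as noted in the text), so $\mathbf{F}$ lands in the $\SU(2)_-$-fixed subspace of $\mathbf{V}$; consequently any $\SU(2)_+$-invariant subspace of $\mathrm{Im}(\mathbf{F})$ is automatically $\SO(4)$-invariant.

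Next, I would compute $\mathbf{F}(\mathrm{Id})$ directly. Setting $\langle\zeta_i,\zeta_j\rangle = \delta_{ij}$ in \eqref{e:kron}, the off-diagonal contributions vanish and the three diagonal terms telescope to $-r^6\mathbf{F}(\mathrm{Id}) = 3(rdr)^2 - \alpha_1^2 - \alpha_2^2 - \alpha_3^2$. A short computation using the fact that each $I_j$ is $g_0$-orthogonal and $g_0$-skew-adjoint shows that $\{rdr,\alpha_1,\alpha_2,\alpha_3\}$ is a $g_0$-orthogonal coframe of squared norm $r^2$, so $(rdr)^2+\alpha_1^2+\alpha_2^2+\alpha_3^2 = r^2g_0$. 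Substituting yields $\mathbf{F}(\mathrm{Id}) = r^{-4}(g_0 - 4dr^2)$, which by \eqref{eq:10} is a nonzero scalar multiple of the generator of $\mathbf{U}_1$. In particular, $\R\cdot\mathrm{Id}\subset \ker(\mathrm{pr}_\mathbf{W}\circ\mathbf{F})$.

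Finally, I would apply Schur's lemma to the $\SU(2)_+$-equivariant map $\mathrm{pr}_\mathbf{W}\circ\mathbf{F}|_{\Sym^2_0\R^3}:\mathbf{S}^4_+\to\mathbf{W}$: since the source is irreducible, the map is either zero or injective. To rule out the zero alternative, decompose $\mathbf{U}_1\oplus\mathbf{U}_2 = \R\oplus\Lambda^2_+\oplus\Lambda^2_-$ as an $\SU(2)_+$-representation: $\Lambda^2_+$ carries the adjoint representation of $\SU(2)_+$ (via the identification $\su(2)_+ = \langle I_1,I_2,I_3\rangle$) while $\Lambda^2_-$ is $\SU(2)_+$-trivial, so $\mathbf{U}_1\oplus\mathbf{U}_2\cong (\mathbf{S}^0_+)^{\oplus 4}\oplus\mathbf{S}^2_+$, which contains no copy of $\mathbf{S}^4_+$. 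Were $\mathrm{pr}_\mathbf{W}\circ\mathbf{F}$ to vanish on $\Sym^2_0\R^3$, then $\mathbf{F}(\Sym^2_0\R^3)$ would be an $\SU(2)_+$-image of $\mathbf{S}^4_+$ inside $\mathbf{U}_1\oplus\mathbf{U}_2$, a contradiction. Hence the map is injective, and by the observations of Step~1 its image is an $\SO(4)$-subrepresentation of $\mathbf{W}$ isomorphic to $\mathbf{S}^4_+$. I expect the only place requiring genuine care to be the bookkeeping of the $\SU(2)_\pm$-conventions, in particular the assertion that $\Lambda^2_+$ and not $\Lambda^2_-$ carries the nontrivial $\SU(2)_+$-action; this is standard once each $I_j$ is identified with its associated self-dual $2$-form via $g_0$.
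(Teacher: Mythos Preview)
Your argument is correct and takes a genuinely different, more representation-theoretic route than the paper. The paper proceeds by writing down an explicit linear map $\mathbf{G}:\Sym^2\R^3\to\mathbf{V}$ (formula \eqref{eq:13}), verifies by hand---using the explicit generators \eqref{eq:12} and \eqref{eq:12a} of $\mathbf{U}_1$ and of the $\mathbf{S}^2_+$-part of $\mathbf{U}_2$---that $\mathbf{F}-\mathbf{G}$ takes values in $\mathbf{U}_1\oplus\mathbf{U}_2$, and then observes from the shape of $\mathbf{G}$ (a quadratic expression in the basis $\alpha_1,\alpha_2,\alpha_3$ of $\mathbf{S}^2_+$) that it realizes the identification $\Sym^2_0\mathbf{S}^2_+=\mathbf{S}^4_+$. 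You instead combine the single direct computation $\mathbf{F}(\mathrm{Id})\in\mathbf{U}_1$ with Schur's lemma and the observation that $\mathbf{U}_1\oplus\mathbf{U}_2$ contains no $\mathbf{S}^4_+$ as an $\SU(2)_+$-representation. This is cleaner and avoids any coordinate bookkeeping, but the paper's explicit formula for $\mathbf{G}=\mathrm{pr}_\mathbf{W}\circ\mathbf{F}$ is not wasted effort: it becomes the very definition of a \emph{reduced Kronheimer term} (Definition~\ref{d:red_kron}) and is used again in Proposition~\ref{p:classify} and in the proof of Theorem~\ref{th:max-vol-kron}.

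One small point to tighten: your contradiction in the final step tacitly assumes that $\mathbf{F}$ is nonzero on $\Sym^2_0\R^3$, since otherwise $\mathbf{F}(\Sym^2_0\R^3)=\{0\}\subset\mathbf{U}_1\oplus\mathbf{U}_2$ would be no contradiction at all. This is of course immediate from the manifest linear independence of the six tensors appearing in \eqref{e:kron}, but it should be said.
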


\begin{proof}
Even though this is not logically necessary, we begin by constraining the possible irreducible representations contained in $\mathbf{V}$. By construction, if $h_0 \in \mathbf{V}$, then all components of $h_0$ take the form $q_0/r^6$, where $q_0$ is a harmonic quadratic polynomial. Thus, $\mathbf{V}$ is an invariant subspace of
\begin{equation}\label{eq:11}
(\Sym^2_0\R^4)^{\otimes 2} = (\mathbf{S}^2_+ \otimes \mathbf{S}^2_-)^{\otimes 2} = (\mathbf{S}^4_+ \oplus \mathbf{S}^2_+ \oplus \R)\otimes(\mathbf{S}^4_- \oplus \mathbf{S}^2_- \oplus \R).
\end{equation}
The subspace $\mathbf{U}_1 \subset \mathbf{V}$ corresponds to the $\R$-component in \eqref{eq:11} and is spanned by
\begin{equation}
\label{eq:12}
\frac{1}{r^6}(3(r dr)^2 - \alpha_1^2 - \alpha_2^2 - \alpha_3^2)
\end{equation}
according to \eqref{eq:10}. The subspace $\mathbf{U}_2 \subset \mathbf{V}$ corresponds to the $(\mathbf{S}^2_+ \oplus \mathbf{S}^2_-)$-component in \eqref{eq:11}. It is clear that the three tensors $\cL_{X}g_0$, where $X = I(\nabla_{g_0}(\frac{1}{r^2}))$ for  $I = I_1, I_2, I_3$, generate the $\mathbf{S}^2_+$-part of this space. Calculating these tensors, we obtain that the $\mathbf{S}^2_+$-part of $\mathbf{U}_2$ is spanned by
\begin{equation}
\label{eq:12a}
\frac{1}{r^6}(r dr)\cdot \alpha_1, \;\,\frac{1}{r^6}(r dr) \cdot \alpha_2, \;\,\frac{1}{r^6}(r dr) \cdot \alpha_3.
\end{equation}

By the definition of $\mathbf{F}$ we have for all $\zeta \in \Sym^2\R^3$ that
\begin{equation*}
\begin{split}
-r^6 \mathbf{F}(\zeta) = \zeta_{11} ((rdr)^2 + \alpha_1^2 - \alpha_2^2 - \alpha_3^2)
+ \zeta_{22} ((rdr)^2 + \alpha_2^2 - \alpha_3^2 - \alpha_1^2)
+ \zeta_{33} ((rdr)^2 + \alpha_3^2 - \alpha_1^2 - \alpha_2^2) \\
+ \, 2\zeta_{12} (\alpha_1 \cdot \alpha_2 - rdr \cdot \alpha_3)
+ 2\zeta_{13} (\alpha_1 \cdot \alpha_3 - rdr \cdot \alpha_2)
+ 2\zeta_{23} (\alpha_2 \cdot \alpha_3 - rdr \cdot \alpha_1).
\end{split}
\end{equation*}
Define a new linear map $\mathbf{G}$ on $\Sym^2\R^3$ by 
\begin{equation}\label{eq:13}
\begin{split}
-r^6 \mathbf{G}(\zeta) := \frac23(\zeta_{11} (2\alpha_1^2 - \alpha_2^2 - \alpha_3^2) + \zeta_{22} (2\alpha_2^2 - \alpha_3^2 - \alpha_1^2) + \zeta_{33} (2\alpha_3^2 - \alpha_1^2 - \alpha_2^2)) \\
+\, 2\zeta_{12} \alpha_1 \cdot \alpha_2 + 2\zeta_{13} \alpha_1 \cdot \alpha_3 + 2\zeta_{23} \alpha_2 \cdot \alpha_3.
\end{split}
\end{equation}
It is easy to check using \eqref{eq:12}, \eqref{eq:12a} that $\mathbf{F} - \mathbf{G}$ takes values in $\mathbf{U}_1 \oplus \mathbf{U}_2$. Because $(\alpha_1,\alpha_2,\alpha_3)$ is a basis of $\mathbf{S}_+^2$ and because $\mathbf{S}^4_+ = \Sym_0^2\mathbf{S}^2_+$, it is clear from \eqref{eq:13} that $\mathbf{G}$ defines an isomorphism from ${\rm Sym}^2_0\R^3$ onto the $\mathbf{S}^4_+$-component in \eqref{eq:11}, so that necessarily $\mathbf{G} = \mathrm{pr}_{\mathbf{W}} \circ \mathbf{F}$ on ${\rm Sym}^2_0\R^3$.
\end{proof}

\begin{rk}
We can transport the given $\SU(2)_+$-action on $\mathbf{S}^4_+ \subset \mathbf{V}$ to $\Sym^2_0\R^3$ by using the linear isomorphism $\mathrm{pr}_{\mathbf{W}} \circ \mathbf{F}$ of Lemma \ref{l:kron}. We obtain an irreducible representation of $\SU(2)_+$ on $\Sym^2_0\R^3$ with kernel equal to $\{\pm \mathrm{Id}_{\R^4}\}$, and hence an irreducible representation of $\SO(3)$ on $\Sym^2_0\R^3$. This is \emph{not} equal to the canonical $\SO(3)$-action on $\Sym^2_0\R^3$ although it is of course conjugate to it.
\end{rk}

\begin{rk}\label{rk:kron}
From \eqref{eq:12a} and \eqref{eq:13}, if some element $h_0 \in \mathbf{V}$ belongs to one of the two invariant subspaces $\mathbf{S}^2_+$ or $\mathbf{S}^4_+$, then $h_0(\partial_r, \partial_r) = 0$. Since $\mathrm{im}\,\mathbf{F} \subset \mathbf{U}_1 \oplus \mathbf{S}^2_+ \oplus \mathbf{S}^4_+$, it follows that 
\begin{equation*}
(\mathrm{pr}_{\mathbf{U}_1} \circ \mathbf{F})(\zeta) = \mathbf{F}(\zeta)(\partial_r, \partial_r) \cdot \frac{r^4}{6}\Hess_{g_0}(\frac{1}{r^2}) = -\frac{1}{6}\tr(\zeta) \cdot \Hess_{g_0}(\frac{1}{r^2})
\end{equation*}
for all $\zeta \in \Sym^2\R^3$. Thus, if $\zeta = (\langle \zeta_i, \zeta_j\rangle)$ with $(\zeta_1,\zeta_2,\zeta_3) \in \mathfrak{h} \otimes \R^3$, then the coefficient has a sign and vanishes if and only if the associated Kronheimer space is isometric to $\R^4/\Gamma$. This observation led us to our main theorem and is in fact equivalent to it in the case of a Kronheimer space.
\end{rk}

\subsection{Classification of the possible leading terms}
\label{sec:classterms}

Lemma \ref{l:kron} motivates the following:

\begin{definition}\label{d:red_kron}
A \emph{reduced Kronheimer term} is an element of $\mathbf{S}^4_+ = \mathrm{im}(\mathrm{pr}_\mathbf{W} \circ \mathbf{F}) \subset \mathbf{V}$. Explicitly, a reduced Kronheimer term is a symmetric $2$-tensor $h_0^+$ on $\R^4 \setminus \{0\}$ of the form
\begin{equation}\label{e:redkron}
\begin{split}
-\frac32 r^6 h_0^+ = \zeta_{11}(2\alpha_1^2 - \alpha_2^2 - \alpha_3^2) + \zeta_{22}(2\alpha_2^2 - \alpha_3^2 - \alpha_1^2) + \zeta_{33}(2\alpha_3^2 - \alpha_1^2-\alpha_2^2)\\
+\, 3\zeta_{12}(\alpha_1\cdot\alpha_2) + 3\zeta_{13}(\alpha_1 \cdot \alpha_3) + 3\zeta_{23}(\alpha_2 \cdot \alpha_3), 
\end{split}
\end{equation}
where $\alpha_j = I_j(rdr)$ for $I_1,I_2,I_3$ as in \eqref{e:quats} and where $(\zeta_{ij})$ is any symmetric $3 \times 3$ matrix.

A \emph{reduced Kronheimer term for the opposite orientation} is a symmetric $2$-tensor $h_0^-$ on $\R^4\setminus\{0\}$ of the form $h_0^-=R^*h_0^+$ for some reduced Kronheimer term $h_0^+$ and some $R\in {\rm O}(4) \setminus \SO(4)$.
\end{definition}

\begin{rk}\label{rk:summary}
Using this language, the discussion of Section \ref{sec:kron} implies that for $\Gamma \subset {\rm SU}(2)$ and $\mathfrak{h}$ a Cartan subalgebra of the simple Lie algebra associated with $\Gamma$ via the McKay correspondence, for $\zeta \in \mathfrak{h} \otimes \R^3$, and for $(M,g)$ the unique Kronheimer gravitational instanton with period point $\zeta$, there exist ALE coordinates for $(M,g)$ such that the leading term $h_0$ satisfies $h_0 = (|\Gamma|/2\pi^2) h_0^+$, where $h_0^+$ is the reduced Kronheimer term given by \eqref{e:redkron} for $\zeta_{ij} = \langle \zeta_i, \zeta_j\rangle$. In  particular, $h_0 = 0$ if and only if $\langle \zeta_i,\zeta_j \rangle = \mu \delta_{ij}$ for some $\mu \geq 0$. However, we should also recall from Remark \ref{rk:kron} that in Kronheimer's \emph{original} ALE coordinates, the leading term vanishes if and only if $(M,g) \cong (\R^4/\Gamma,g_0)$.
\end{rk}

We are now in position to classify the possible leading terms of Ricci-flat ALE metrics.

\begin{proposition}\label{p:classify}
Let $h_0$ be a symmetric $2$-tensor on $\R^4 \setminus \{0\}$ that satisfies 
\begin{align}\label{e:leading_cond}
\cL_{r\partial_r}h_0 = -2h_0,\;\,\cB_{g_0}h_0 = 0, \;\, \Delta_{g_0}h_0 = 0.
\end{align}

{\rm (1)} There exists a unique decomposition
\begin{align}\label{e:final_gauge}
h_0 = h_0^++ h_0^- +  \cL_{X_1}g_0 + \cL_{X_2} g_0+ \cL_{X_3}g_0,
\end{align}
where $h_0^\pm$ are reduced Kronheimer terms for the two orientations and where
\begin{align}\label{e:harm_gauge}
X_k(x) = \frac{1}{r^4}L_k x, \;\,L_k \in \R^{4\times 4},
\end{align}
with $L_1$ a multiple of the identity, $L_2$ skew-symmetric, and $L_3$ trace-free symmetric. All terms of the decomposition \eqref{e:final_gauge} again satisfy all the conditions of \eqref{e:leading_cond}. 

{\rm (2)} We have the following characterizations:
\begin{eqnarray}
\tr_{g_0}h_0 = 0 \Longleftrightarrow \dvg_{g_0}h_0 = 0 \Longleftrightarrow X_3 = 0,\label{e:tracefreeness}\\
\partial_r \,\lrcorner\,h_0 = 0 \Longleftrightarrow X_1 = X_2 = X_3 = 0.\label{e:tangentiality}
\end{eqnarray}

{\rm (3)} Let $G$ be any subgroup of $\SO(4)$. If $h_0$ is $G$-invariant, then so are $h_0^+$, $h_0^-$, $L_1$, $L_2$, $L_3$.
\end{proposition}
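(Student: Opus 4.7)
My plan is to classify $\mathbf{V}'$, the space of tensors satisfying \eqref{e:leading_cond}, by decomposing it into $\SO(4)$-irreducibles and matching each irreducible to one of the subspaces already exhibited in Sections~\ref{sec:harm-gauge} and~\ref{sec:kron}. Componentwise harmonicity together with $(-4)$-homogeneity forces every component of $h_0$ to be of the form $q(x)/r^6$ with $q$ a harmonic quadratic, so $\mathbf{V}'$ is a subrepresentation of the $90$-dimensional space $\Sym^2\R^4\otimes\mathrm{Harm}^2\R^4$. Using $\R^4=\mathbf{S}^1_+\otimes\mathbf{S}^1_-$ and Clebsch--Gordan, one expands this ambient space into $\mathrm{Spin}(4)$-irreducibles; the Bianchi operator $\cB_{g_0}$ is $\SO(4)$-equivariant and maps into the $64$-dimensional space $\R^4\otimes\mathrm{Harm}^3\R^4\cong(\mathbf{S}^4_+\oplus\mathbf{S}^2_+)\otimes(\mathbf{S}^4_-\oplus\mathbf{S}^2_-)$ of $(-5)$-homogeneous harmonic $1$-forms. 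Assuming that $\cB_{g_0}$ is nonzero on each of its four common isotypical components with the domain, Schur's lemma forces $\mathbf{V}'$ to have irreducible decomposition $\R\oplus(\mathbf{S}^2_+\oplus\mathbf{S}^2_-)\oplus(\mathbf{S}^2_+\otimes\mathbf{S}^2_-)\oplus\mathbf{S}^4_+\oplus\mathbf{S}^4_-$ of total dimension $1+6+9+5+5=26$, with each irreducible occurring with multiplicity one.

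For part~(1) I would identify each summand with a known subspace: the trivial $\R$ is $\mathbf{U}_1$, realized by $\cL_{X_1}g_0$ with $L_1$ a multiple of the identity; $\mathbf{S}^2_+\oplus\mathbf{S}^2_-\cong\Lambda^2\R^4$ is $\mathbf{U}_2$, realized by $\cL_{X_2}g_0$ with $L_2$ skew-symmetric; $\mathbf{S}^2_+\otimes\mathbf{S}^2_-\cong\Sym_0^2\R^4$ is $\mathbf{U}_3$, realized by $\cL_{X_3}g_0$ with $L_3$ traceless symmetric; $\mathbf{S}^4_+$ is the image of $\mathrm{pr}_{\mathbf{W}}\circ\mathbf{F}$ from Lemma~\ref{l:kron}; and $\mathbf{S}^4_-$ is obtained by pulling back $\mathbf{S}^4_+$ under any $R\in\mathrm{O}(4)\setminus\SO(4)$. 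Since these five irreducibles are pairwise non-isomorphic, Schur's lemma makes the decomposition of any $h_0\in\mathbf{V}'$ into pieces $\cL_{X_k}g_0$ and $h_0^\pm$ canonical and hence unique; the $L_k$ are recovered uniquely because any Killing field of $g_0$ on $\R^4\setminus\{0\}$ that decays at infinity must vanish.

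For part~(2), the first chain of equivalences is immediate: $\tr_{g_0}$ is an equivariant projection onto $\mathbf{U}_3$ (nontrivial only on $\cL_{X_3}g_0$), while the Bianchi identity $\dvg_{g_0}h_0=\tfrac12 d\tr_{g_0}h_0$ combined with decay gives $\dvg_{g_0}h_0=0\Leftrightarrow\tr_{g_0}h_0=0$. For the second equivalence I would check directly from~\eqref{e:redkron} that $\partial_r\,\lrcorner\,h_0^\pm=0$ (the formula involves only $\alpha_i\cdot\alpha_j$ terms and $\alpha_j(\partial_r)=0$), and then show by elementary component calculations based on~\eqref{e:foo} that $\partial_r\,\lrcorner\,\cL_{X_k}g_0\ne 0$ whenever $L_k\ne 0$; linear independence of the three resulting $1$-forms then forces $X_1=X_2=X_3=0$ whenever $\partial_r\,\lrcorner\,h_0=0$.

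Part~(3) is automatic from the canonicity of the decomposition: each isotypical projection is $\SO(4)$-equivariant, so any $G\subset\SO(4)$ preserves every summand, and the assignment $L\mapsto\cL_{X_L}g_0$ intertwines the conjugation $\SO(4)$-action on $\R^{4\times 4}$ with the natural action on tensors, transferring $G$-invariance from $\cL_{X_k}g_0$ to $L_k$. The most delicate step in the argument is the one deferred above, namely verifying nontriviality of $\cB_{g_0}$ on each of the four common isotypical components $\mathbf{S}^4_+\otimes\mathbf{S}^4_-$, $\mathbf{S}^4_+\otimes\mathbf{S}^2_-$, $\mathbf{S}^2_+\otimes\mathbf{S}^4_-$, $\mathbf{S}^2_+\otimes\mathbf{S}^2_-$ of $\Sym^2\R^4\otimes\mathrm{Harm}^2\R^4$, which is what rules out any hidden extra summand in $\mathbf{V}'$. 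This can be settled by evaluating $\cB_{g_0}$ on a single explicit nonzero element of each of those four components.
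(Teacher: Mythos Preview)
Your approach is correct and runs parallel to the paper's for parts~(2) and~(3), but differs substantively for part~(1). The paper also identifies $\mathbf{V}'$ with the kernel of the equivariant map $\mathbf{H}$ of~\eqref{e:def_H}, observes that Sections~\ref{sec:harm-gauge}--\ref{sec:kron} already furnish a $26$-dimensional subspace of $\ker\mathbf{H}$, and then simply computes $\dim\ker\mathbf{H}=26$ by a Maxima script (Appendix~\ref{app:code}), with the target taken to be the full $80$-dimensional $\R^4\otimes\Sym^3\R^4$. Your route replaces this machine computation by a Clebsch--Gordan analysis: you note (correctly) that the image actually lands in the $64$-dimensional subspace $\R^4\otimes\mathrm{Harm}^3\R^4\cong(\mathbf{S}^4_+\oplus\mathbf{S}^2_+)\otimes(\mathbf{S}^4_-\oplus\mathbf{S}^2_-)$, decompose the $90$-dimensional domain, and reduce the dimension count to four nonvanishing checks of $\cB_{g_0}$ on the isotypical components $\mathbf{S}^4_+\otimes\mathbf{S}^4_-$, $\mathbf{S}^4_+\otimes\mathbf{S}^2_-$, $\mathbf{S}^2_+\otimes\mathbf{S}^4_-$, $\mathbf{S}^2_+\otimes\mathbf{S}^2_-$. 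This is a genuinely more conceptual alternative that avoids computer algebra; the price is that you must still carry out those four explicit evaluations, and for the $\mathbf{S}^2_+\otimes\mathbf{S}^2_-$ component (which has multiplicity~$2$ in the domain) you must be sure to test an element \emph{outside} the known kernel copy $\mathbf{U}_3$. One small point of phrasing: the five summands you list are not all irreducible (one is $\mathbf{S}^2_+\oplus\mathbf{S}^2_-$), so ``pairwise non-isomorphic irreducibles'' should read ``have no irreducible constituent in common''; the conclusion about canonicity and uniqueness is unaffected.
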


\begin{proof}
Define an equivariant linear map
\begin{align}
\label{e:def_H}
\mathbf{H}: \Sym^2\R^4 \otimes \Sym^2_0\R^4 \to \R^4 \otimes \Sym^3\R^4,\;\, h \otimes q \mapsto r^8\cB_{g_0}(\frac{q(x)}{r^6} h),
\end{align}
where $q(x) =q_{ij} x^i x^j$ denotes the harmonic quadratic polynomial associated with $q \in \Sym^2_0\R^4$. The kernel of $\mathbf{H}$ is naturally and equivariantly isomorphic to the space of all tensors $h_0$ satisfying \eqref{e:leading_cond}. By Section \ref{sec:harm-gauge}, $\mathrm{ker}\,\mathbf{H}$ contains a $16$-dimensional invariant subspace $\R \oplus (\mathbf{S}^2_+ \oplus \mathbf{S}^2_-) \oplus (\mathbf{S}^2_+ \otimes \mathbf{S}^2_-)$ consisting of tensors of the form $\cL_{X_1}g_0 + \cL_{X_2}g_0 + \cL_{X_3}g_0$ as above. By Lemma \ref{l:kron} in Section \ref{sec:kron}, $\mathrm{ker}\,\mathbf{H}$ also contains a $10$-dimensional invariant subspace $\mathbf{S}^4_+ \oplus \mathbf{S}^4_-$ consisting of reduced Kronheimer terms for the two orientations. Thus, to prove item (1) it suffices to compute ${\rm dim}\,{\rm ker}\,\mathbf{H} = 26$, which can be done using the Maxima script in Appendix \ref{app:code}.

To prove item (2), first observe that \eqref{e:tracefreeness} follows from the fact that $\cB_{g_0}h_0 = 0$, \eqref{e:foo1}, and \eqref{e:redkron}.
To prove \eqref{e:tangentiality}, notice that the map $h_0 \mapsto \partial_r \,\lrcorner\,h_0$ is equivariant, so by Schur's lemma and \eqref{e:redkron} it suffices to show that it has trivial kernel on tensors of the form $\cL_{X_1}g_0$, $\cL_{X_2}g_0$, and $\cL_{X_3}g_0$. For the first two types this is obvious from \eqref{eq:12} and \eqref{eq:12a}, respectively. For the remaining type, we write $X_3(x) = \frac{1}{r^4}Lx$ for all $x\in\R^4\setminus\{0\}$ with $L = (\ell_{ij}) \in \R^{4\times 4}$ trace-free symmetric. Then
$$(\partial_r \,\lrcorner\, \cL_{X_3}g_0)(\partial_k) = -\frac{2}{r^7}(r^2 Lx + 2q(x) x)^k,$$
where $q(x) = \ell_{ij}x^ix^j$ is the quadratic polynomial associated with $L$. If this vanishes for all $k$, then every $v \in \R^4\setminus\{0\}$ is an eigenvector of $L$ with eigenvalue $-2q(\frac{v}{|v|})$, so that $L = 0$.

For item (3), note that the projections from $\mathrm{ker}\,\mathbf{H}$ onto its $\SO(4)$-invariant subspaces are $\SO(4)$-equivariant, so if $h_0$ is $G$-invariant, then so are $h_0^+$, $h_0^-$, and $\cL_{X_k}g_0$ ($k = 1,2,3$). It remains to prove that the vector fields $X_k$, or equivalently the matrices $L_k$, are $G$-invariant. But this is clear because the map $X \mapsto \cL_Xg_0$ for $X$ of the form \eqref{e:foo} is $\SO(4)$-equivariant and injective.
\end{proof}

\begin{rk}
It follows from \eqref{e:tracefreeness} that $\mathbf{W} = \mathbf{S}^4_+ \oplus \mathbf{S}^4_-$ in Lemma \ref{l:kron} is as small as it can be.
\end{rk}

\section{The volume inequality}
\label{sec:volume-inequality}

\subsection{Proof of Theorem \ref{th:classterms}}
\label{sec:proof-B}

Let $(M,g)$ be a $4$-dimensional Ricci-flat {\rm ALE} space with $\Gamma\neq \{1\}$. Let $k_0 \in \N$. By Section \ref{sec:exp} there exists a diffeomorphism 
\begin{align*}
\Phi: (\R^4 \setminus B_1(0))/\Gamma \to M \setminus K
\end{align*}
of class at least $C^{k_0+1}$ such that 
\begin{equation*}
\Phi^*g - g_0 = h_0 + h',
\end{equation*}
where the leading term $h_0$ satisfies
\begin{equation*}
\cL_{r\partial_r}h_0 = -2h_0,\;\,\cB_{g_0}h_0 = 0, \;\, \Delta_{g_0}h_0 = 0,
\end{equation*}
and where the remainder $h'$ can be estimated by
\begin{align*}
\sum_{k = 0}^{k_0} r^{k}|\nabla^k_{g_0} h'|_{g_0} \leq C(k_0)r^{-5}.
\end{align*}
Then Proposition \ref{p:classify} tells us that after lifting through $\Gamma$, 
\begin{align*}
h_0 = h_0^++ h_0^- +  \cL_{X_1}g_0 + \cL_{X_2} g_0+ \cL_{X_3}g_0,
\end{align*}
where $h_0^\pm$ are $\Gamma$-invariant reduced Kronheimer terms for the two orientations and $X_1$, $X_2$, $X_3$ are $\Gamma$-invariant harmonic vector fields of the appropriate types. If we compose $\Phi$ with the time-$1$ flows of these vector fields, the metric changes by the corresponding Lie derivatives of $g_0$ to leading order. By Taylor expansion, since $|\nabla_{g_0}^j X_k|_{g_0} = O(r^{-3-j})$ for $k = 1,2,3$ and all $j \in \N_0$, all other changes to  the metric are certainly $O(r^{-5})$ as $r \to \infty$, with as many derivatives as the regularity of $h'$ allows. Thus, we are free to assume without loss that $X_1 = X_2 = X_3 = 0$, and hence that 
\begin{align}\label{e:improve}
\partial_r \,\lrcorner\,h_0 = 0, \;\, \tr_{g_0}h_0 = 0, \;\, \dvg_{g_0}h_0 = 0,
\end{align}
where the latter two properties are equivalent thanks to the Bianchi gauge condition.

We now modify $\Phi$ to become a CMC gauge, preserving all of its other properties except for the number of derivatives of $h'$ that we control, which will drop by a bounded amount. Since this does not affect the statement of Theorem \ref{th:classterms}, we will from now on treat $k_0$ as a generic constant.

Let us recall how the canonical {\rm CMC} foliation $\{\Sigma_\rho\}_{\rho>\rho_0}$ was constructed in \cite{ChoEicVol17}. For all $\rho>0$ we consider the homothety $h_\rho(x)=\rho x$ in $\R^4/\Gamma$. Then on any compact set $K\subset (\R^4 \setminus \{0\})/\Gamma$,
\begin{equation}
h_\rho^*\Phi^*g - g_0= O_K(\rho^{-4})\;\,\text{as}\;\,\rho\to\infty\label{eq:4}
\end{equation}
including all derivatives up to order $k_0$. The unit sphere $\S^3/\Gamma$ has constant mean curvature $3$, and the linearization of the CMC equation on normal deformations of $\S^3/\Gamma$ is the Jacobi operator 
$J$ $=$ $\Delta_{\S^3/\Gamma}+3$. On $\S^3$ the kernel of $\Delta_{\S^3}+3$ is given by the restriction to $\S^3$ of the linear functions on $\R^4$ and corresponds to translations of $\S^3$ inside $\R^4$. As $\Gamma\neq\{1\}$, this flexibility disappears in $\R^4/\Gamma$, so that $J$ is invertible. It follows that for $\rho\gg1$ we can deform $\S^3/\Gamma$ into a hypersurface $\tilde\Sigma_\rho$ of constant mean curvature $3$ with respect to $h_\rho^*\Phi^*g$. More precisely, we can write $\tilde\Sigma_\rho$ as a radial graph
\begin{equation*}
\tilde\Sigma_\rho = \{ (1 + f_\rho(x))x \in \R^4/\Gamma: x \in \S^3/\Gamma  \},
\end{equation*}
and \eqref{eq:4} implies that for all $0\leq k\leq k_0$ we have 
\begin{equation}
\nabla^kf_\rho = O(\rho^{-4})\;\,\text{as}\;\,\rho\to\infty.\label{eq:6}
\end{equation}
Pushing $\tilde\Sigma_\rho$ forward by $\Phi \circ h_\rho$ now gives the required CMC hypersurfaces $\Sigma_\rho$. The same argument applied to the obvious family of CMC hypersurfaces around $\tilde\Sigma_\rho$ with mean curvature between $\frac32$ and $6$ tells us that the family $\{\Sigma_\rho\}_{\rho>\rho_0}$ is actually a foliation.

The key point is that in our setting we are able to improve \eqref{eq:6} by one order by using \eqref{e:improve}. To see this, we will compute the mean curvature $H$ of the unit sphere $\S^3/\Gamma$ with respect to $h_\rho^*\Phi^*g$. It is clear from \eqref{eq:4} that $H = 3 + O(\rho^{-4})$ up to $k_0$ derivatives, which leads to \eqref{eq:6}. However, we can improve this to $H = 3 + O(\rho^{-5})$, with a corresponding improvement in \eqref{eq:6}, by viewing $\S^3/\Gamma$ as a level set of the function $u = r^2$ and using \eqref{e:improve} to exhibit cancellations in the formula
\begin{equation}\label{e:levelMC}
H = \frac{1}{|\nabla u|}(\Delta u - \frac{1}{|\nabla u|^2}(\Hess\,u)(\nabla u,\nabla u)).
\end{equation}

The details are as follows. First note that the background metric in \eqref{e:levelMC} is
$$h_\rho^*\Phi^*g = g_0 + h_0 \rho^{-4} + O(\rho^{-5}).$$
The Christoffel symbols of this metric are given by
\begin{equation}\label{e:christ}
\Gamma_{ij}^k = \frac{1}{2}(\partial_i h_{0,jk} + \partial_{j}h_{0,ik} - \partial_k h_{0,ij})\rho^{-4} + O(\rho^{-5})
\end{equation}
in the standard Euclidean coordinates of $\R^4$ in a tubular neighborhood of $\S^3$. Then
\begin{equation*}
\begin{split}
\nabla_i u = 2x^i - 2h_{0,ij}x^j\rho^{-4} + O(\rho^{-5}) = 2x^i + O(\rho^{-5}),\\
\Hess_{ij}u = 2\delta_{ij} - (\partial_i h_{0,jk} + \partial_{j}h_{0,ik} - \partial_k h_{0,ij})x^k \rho^{-4} + O(\rho^{-5}),
\end{split}
\end{equation*}
where in the first line we have used the property $\partial_r \,\lrcorner\,h_0 = 0$ from \eqref{e:improve}. Thus,
\begin{equation*}
\begin{split}
|\nabla u|^2 = 4r^2 + 4h_{0,ij}x^ix^j\rho^{-4} + O(\rho^{-5}) = 4r^2 + O(\rho^{-5}),\\
\Delta u = 8 - 2(\Tr{g_0}{h_0} + (\cB_{g_0}h_0)_kx^k )\rho^{-4} + O(\rho^{-5}) = 8 + O(\rho^{-5}),\\
(\Hess\,u)(\nabla u,\nabla u) = 8r^2 - 4x^ix^jx^k(\partial_i h_{0,jk} + \partial_{j}h_{0,ik} - \partial_k h_{0,ij})\rho^{-4} + O(\rho^{-5}) = 8r^2 + O(\rho^{-5}),
\end{split}
\end{equation*}
thanks to all three of the properties of \eqref{e:improve} and by applying Euler's homogeneity relation to $h_0$ in the last line. Evaluating \eqref{e:levelMC} at $u = 1$, it is then clear that $H = 3 + O(\rho^{-5})$, as desired.

The upshot of all of this is that if we replace $\Phi$ by the diffeomorphism 
$$\rho x \mapsto \Phi((1+ f_\rho(x))\rho x) \;\;(\rho \gg 1, \, x \in \S^3/\Gamma),$$
then the leading term $h_0$ of the metric remains unchanged, and the remainder $h'$ satisfies the same estimates as before, but it now holds by construction  that $\Phi(\partial B_\rho(0)/\Gamma) = \Sigma_\rho$. \hfill $\Box$

\subsection{Proof of Theorem \ref{th:max-vol}} We now use the ALE diffeomorphism $\Phi$ provided by Theorem \ref{th:classterms}. One consequence of the property $\Tr{g_0}{h_0} = 0$ is that 
\begin{equation}\label{e:vol-error}
\Phi^*(d\vol_{g}) = d\vol_{\Phi^*g} = d\vol_{g_0} (1+ O(r^{-5})).
\end{equation}
This immediately implies that the function
$$g(\rho) := \vol_g(\Omega_\rho) - \vol_{g_0}(B_\rho(0)/\Gamma)$$
satisfies for all $\rho_2>\rho_1\gg1$ that
$$g(\rho_2)-g(\rho_1) = \int_{\rho_1}^{\rho_2} \Phi^*(d\vol_{g}) - d\vol_{g_0} = O(\rho_1^{-1}).$$
Thus, $g(\rho)$ has a finite limit as $\rho \to \infty$, which proves Theorem \ref{th:max-vol}(1), i.e., the existence of $\cV$. 

For Theorem \ref{th:max-vol}(2), we first prove that there exists a function $u$ on $M$ such that $\Delta_g u = 8$ and 
\begin{equation}\label{e:asympt-u}
\Phi^*u = r^2 + br^{-2} + O_\epsilon(r^{-3+\epsilon})
\end{equation}
for some $b \in \R$ and all $\epsilon \in (0,1)$. For this we require the precise expansion
$$\Delta_{\Phi^*g}r^2 = 8 - 2(\Tr{g_0}{h_0} + (\cB_{g_0}h_0)(r\partial_r) ) + O(r^{-5}) = 8 + O(r^{-5}),$$
which follows from the work in Section \ref{sec:proof-B} by pushing forward by the homotheties $h_\rho$. Following a standard pattern, we now extend $\Phi_*r^2$ to a smooth function $u_0$ on $M$ and let $f = 8-\Delta_g u_0$. Note that $\Phi^*f = O(r^{-5})$. It suffices to find a function $\bar{u}$ on $M$ such that $\Delta_g \bar{u}= f$ and 
$$\Phi^*\bar{u} =br^{-2} + O_\epsilon(r^{-3+\epsilon})$$ because then $u = u_0 + \bar{u}$ solves the original problem. The existence of $\bar{u}$ is a standard fact and can be proved in many ways. For example, by solving Dirichlet problems on larger and larger balls and using Moser iteration and weighted Sobolev inequalities, one first constructs a solution $\bar{u}$ such that $\Phi^*\bar{u} = O_\epsilon(r^{-2+\epsilon})$. Then it is clear that $\Delta_{g_0}(\Phi^*\bar{u}) = O(r^{-5})$, so standard properties of the Green's function on $\R^4$ imply that $\Phi^*\bar{u} = br^{-2} + O_\epsilon(r^{-3+\epsilon})$, as desired.

Let $\nu$ be the exterior unit normal to the domain $\Omega_\rho$ with respect to $g$. Integrating the equation $\Delta_g u = 8$ over $\Omega_\rho$, integrating by parts, and using \eqref{e:vol-error} we get
\begin{align*}
8 \vol_g(\Omega_\rho) &= \int_{\partial \Omega_\rho} du(\nu) (\nu \, \lrcorner\,d\vol_g) = \int_{\partial\Omega_\rho} du(\nu)(\nu\,\lrcorner\,\Phi_*d\vol_{g_0}) + O(\rho^{-1}).
\end{align*}
From now on we will ignore the map $\Phi$ for convenience. Then, using \eqref{e:asympt-u} in the first line,
\begin{equation*}
\begin{split}
du(\nu)|_{\partial\Omega_\rho} = 2\rho - 2b\rho^{-3}  + 2\rho( dr(\nu) - 1) + O_\epsilon(\rho^{-4+\epsilon}),\\
\nu\,\lrcorner\,d\vol_{g_0} = dr(\nu)(\partial_r \,\lrcorner\,d\vol_{g_0}).
\end{split}
\end{equation*}
A priori the error $dr(\nu) - 1$ is $O(\rho^{-4})$ but expanding the equation $g(\nu,\nu) = 1$ yields
$$dr(\nu) - 1 = - \frac{1}{2}h_0(\partial_r,\partial_r) + O(\rho^{-5}) =  O(\rho^{-5})$$
thanks to \eqref{e:improve}. Combining these computations, we get
$$8 \vol_g(\Omega_\rho) = 8\vol_{g_0}(B_\rho(0)/\Gamma) - 2b|\S^3/\Gamma | + O_\epsilon(\rho^{-1+\epsilon}),$$
which obviously implies that 
$b|\S^3/\Gamma| = -4\cV$.

We now reinterpret $b$ as the obstruction to $\nabla u$ being a conformal Killing field, and the proof will show that $b \geq 0$. Recall that $\nabla u$ being conformally Killing means that the trace-free Hessian $\Hess_0 u$  vanishes, and this is equivalent to $\nabla u$ generating a $1$-parameter group of conformal diffeomorphisms of $(M,g)$. In this case, the conformal factor is constant because $\Delta u = 8$, so $\nabla u$ actually generates a $1$-parameter group of homotheties, and by considering the sup norm of the curvature tensor one easily checks that this is equivalent to $(M,g)$ being isometric to $(\R^4/\Gamma, g_0)$ with $u = r^2$.

Using the general identity
$$\Delta \nabla u - \nabla \Delta u = {\rm Ric}\,\nabla u,$$
together with the fact that $u$ has constant Laplacian, we obtain that
\begin{align}\label{e:bochner-type}
\dvg(\Hess_0u)  = {\rm Ric}\,du = 0.
\end{align}
Integrating this against $d u$ over $\Omega_\rho$, we get
\begin{align*}
\int_{\partial\Omega_\rho} (\Hess_0 u)(\nu,\nabla u) (\nu\,\lrcorner\, d\vol) = \int_{\Omega_\rho} |\Hess_0 u|^2 + {\rm Ric}(\nabla u, \nabla u) \; d\vol \geq 0.
\end{align*}
The desired inequality $b \geq 0$, with equality if and only if $\Hess_0 u = 0$, will come out of an expansion of the boundary term. We begin by expanding $\Hess_0 u$. This will turn out to be $O(\rho^{-4})$ to leading order, so there is no need to also expand $\nu$, $\nabla u$, $\nu\,\lrcorner\,d\vol$, and we can simply replace these by their Euclidean approximations. For $\Hess_0 u$, using \eqref{e:christ} and \eqref{e:asympt-u} we have
\begin{align*}
\Hess_{0,ij}u &= \partial_i\partial_j u - \Gamma_{ij}^k\partial_k u - 2g_{ij}\\
&= -\frac{2b}{r^4}(\delta_{ij} - 4 \frac{x^ix^j}{r^2}) - (\partial_i h_{0,jk} + \partial_j h_{0,ik} - \partial_k h_{0,ij})x^k - 2h_{0,ij} + O_\epsilon(r^{-5+\epsilon}).
\end{align*}
This then tells us that, as $3$-forms on $\partial\Omega_\rho$,
\begin{align*}
&(\Hess_0 u)(\nu,\nabla u) (\nu\,\lrcorner\, d\vol) \\&= (12b\rho^{-3} - 2x^ix^jx^k(\partial_i h_{0,jk} + \partial_{j}h_{0,ik} - \partial_k h_{0,ij})\rho^{-1} - 4h_0(\partial_r,\partial_r)\rho + O_\epsilon(\rho^{-4+\epsilon}))(\partial_r \,\lrcorner\,d\vol_{g_0})\\
&= (12b\rho^{-3} + O_\epsilon(\rho^{-4+\epsilon}))(\partial_r\,\lrcorner\,d\vol_{g_0}),
\end{align*}
after applying Euler's relation to $h_0$ and using \eqref{e:improve} as in Section \ref{sec:proof-B}. 

Combining this with the preceding discussion, we obtain that $b \geq 0$, or equivalently that $\cV \leq 0$, with equality if and only if $(M,g)$ is isometric to $(\R^4/\Gamma,g_0)$ with $u = r^2$.\hfill $\Box$

\begin{rk}
The last part of the above proof can be interpreted as a reverse Bishop-Gromov type inequality. In fact, similar computations show that if $\Omega$ is a bounded domain in an $n$-manifold with $\Ric \geq 0$, with boundary mean curvature $\geq \frac{n-1}{\rho}$ for some $\rho > 0$, then $|\Omega| \leq \frac{\rho}{n}|\partial\Omega|$, with equality if and only if $\Omega$ is isometric to a ball of radius $\rho$ in $\R^n$. After completing our work on this paper, we discovered that this result already follows from Ros \cite[Thm 1]{Ros87}, who used a very similar method. Our computation replaces the use of Reilly's formula in \cite{Ros87} by \eqref{e:bochner-type} and \eqref{e:levelMC}.
\end{rk}

\subsection{Proof of Theorem \ref{th:max-vol-kron}} We now assume that $(M,g)$ is a Kronheimer gravitational instanton with period point $\zeta\in \mathfrak{h}\otimes \R^3$. As mentioned in Section \ref{sec:kron}, Kronheimer constructed a particular ALE diffeomorphism $\Phi: (\R^4 \setminus B_1(0))/\Gamma \to M\setminus K$ with respect to which $-(2\pi^2/|\Gamma|)r^6 h_0$ takes the form \eqref{e:kron}. Details can be found in \cite[Section 2]{Auv18}, including the remarkable property that
\begin{equation}\label{e:kronvol}
\vol_g(U_\tau) = \vol_{g_0}(B_\tau(0)/\Gamma)\;\,\text{for all}\;\,\tau\gg 1,
\end{equation}
where $U_\tau \subset M$ denotes the domain interior to $S_\tau := \Phi(\partial B_\tau(0)/\Gamma)$. (See \cite[Lemma 2.5]{Auv18} for this. Note that the map $F_\zeta$ of \cite{Auv18} is a global diffeomorphism from $M\setminus Z$ to $(\R^4 \setminus \{0\})/\Gamma$, where $Z \subset M$ is a finite union of compact $2$-dimensional surfaces.) By Lemma \ref{l:kron} and its proof, we have 
$$h_0 = h_0^+ + \cL_{X_1}g_0 + \cL_{X_2}g_0,$$
where, first of all, $h_0^+$ is a reduced Kronheimer term, so that
\begin{equation}
\partial_r \,\lrcorner\,h_0^+ = 0, \;\, \tr_{g_0}h_0^+ = 0, \;\, \dvg_{g_0}h_0^+ = 0;\label{e:improve+}
\end{equation}
second, the harmonic vector field $X_1$ is a scalar multiple of $\nabla_{g_0}(\frac{1}{r^2})$; and, third, the harmonic vector field $X_2$ is a linear combination of $I_j(\nabla_{g_0}(\frac{1}{r^2}))$ for $j = 1,2,3$. In fact, Remark \ref{rk:kron} tells us that 
\begin{equation}\label{e:sign}
X_1 = -\frac{|\tilde\zeta|^2}{12}\nabla_{g_0}(\frac{1}{r^2}), \;\text{where}\;\,\tilde\zeta := \left(\frac{|\Gamma|}{2\pi^2}\right)^{\frac{1}{2}}\zeta.
\end{equation}

We now replace Kronheimer's gauge $\Phi$ by $\Phi'= \Phi \circ \Phi_2 \circ \Phi_1$, where $\Phi_k$ denotes the time-$1$ flow of $-X_k$ for $k = 1,2$. In this new gauge, the leading term of the metric is $h_0^+$. By Section \ref{sec:proof-B}, because of \eqref{e:improve+}, the canonical CMC hypersurface $\Sigma_\rho$ is a normal graph of height $O(\rho^{-4})$ (rather than the expected $O(\rho^{-3})$) over the coordinate sphere $S_\rho' := \Phi'(\partial B_\rho(0)/\Gamma)$. Let $U_\rho' \subset M$ denote the domain interior to $S_\rho'$. Then it is easy to see that
\begin{align}\label{e:kronvol1}
\vol_g(\Omega_\rho) = \vol_g(U_\rho') + O(\rho^{-1}) = \vol_g(U_{\tau}) + O(\rho^{-1})\;\,\text{for}\;\, \tau^4 =\rho^4 - \frac{2}{3}|\tilde\zeta|^2.
\end{align}
Indeed, $S_\rho' = \Phi(\Phi_2(\Phi_1(\partial B_\rho(0)/\Gamma)))$ by definition, $\Phi_1(\partial B_\rho(0)/\Gamma) = \partial B_\tau(0)/\Gamma$ with $\tau$ in terms of $\rho$ as in \eqref{e:kronvol1} by solving a simple ODE (using \eqref{e:sign}), and $\Phi_2$ obviously preserves each sphere in $\R^4/\Gamma$. The desired formula for $\mathcal{V}(M,g)$ now follows from \eqref{e:kronvol1} and \eqref{e:kronvol}. \hfill $\Box$

\section{Killing fields on $4$-dimensional Ricci-flat ALE spaces}

The idea of the proof of Theorem \ref{th:max-vol} is to search for a conformal Killing vector field asymptotic to $2r\partial_r$. This is done by first producing the harmonic vector field $\nabla u$, where $u$ is asymptotic to $r^2$ with $\Delta u=8$, and then integrating by parts in the Bochner type formula $\dvg(\Hess_0 u) = \Ric\,du = 0$. Using a similar approach, one can give a criterion for the existence of a Killing field asymptotic to a given $\so(4)$ symmetry. The necessary computations are very long, and there does not seem to be an application in the spirit of Theorem \ref{th:max-vol}, so we will only briefly sketch this result.

Let $(M,g)$ be a $4$-dimensional Ricci-flat ALE space asymptotic to $\R^4/\Gamma$. Fix an arbitrary ALE diffeomorphism $\Phi$ as in \eqref{eq:1}. Then for all $X_0 \in \so(4)^\Gamma$ there exists a unique harmonic vector field $X$ on $M$ such that $(\Phi^{-1})_*X=X_0+Y_0+O_\epsilon(r^{-4+\epsilon})$ for all $\epsilon \in (0,1)$, where
$$\cL_{r\partial_r}Y_0 = -4Y_0, \;\, |Y_0|_{g_0} \sim r^{-3}.$$
An asymptotic expansion shows that $$\Delta_{g_0}Y_0 = 0.$$
The next observation is that the divergence of a harmonic vector field on a Ricci-flat manifold is a harmonic function, and it is clear that $\dvg\,X$ goes to zero at infinity, so $\dvg\,X = 0$ by the maximum principle. If the leading term $h_0$ of the metric with respect to $\Phi$ satisfies $\tr_{g_0}h_0 = 0$, then one can deduce from this by asymptotic expansion that $$\dvg_{g_0}Y_0 = 0.$$
Since $Y_0$ has to be of the form \eqref{e:foo}, it follows that for some $a \in \R$ and $Z_0 \in \so(4)$,
$$Y_0 = \frac{1}{r^4}(a r\partial_r + Z_0).$$

If $u$ is the unique solution to $\Delta u = 8$ with $u = r^2 + o(1)$ on $M$, and if the precise expansion \eqref{e:asympt-u} holds for $u$ (which we know is true if $\tr_{g_0}h_0 = 0$ and $\cB_{g_0}h_0 = 0$), then an integration by parts in the identity $0 = \langle \Delta X, \nabla u\rangle - \langle X, \Delta\nabla u\rangle$ shows after a long computation that $a = 0$. 

If $X_0, X_0'$ are in $\so(4)^\Gamma$ with harmonic extensions $X, X'$, and if $\partial_r \,\lrcorner\, h_0 = 0$, then another lengthy integration by parts in the identity $0 = \langle \Delta X, X'\rangle - \langle X, \Delta X'\rangle$ shows that $\langle X_0,Z_0' \rangle = \langle Z_0, X_0'\rangle$ with respect to the Killing form on $\so(4)$. Thus, the endomorphism $X_0 \mapsto Z_0$ of $\so(4)^\Gamma$ is selfadjoint.

Integration by parts in the Bochner type formula \eqref{e:bochner} shows that the selfadjoint endomorphism $X_0 \mapsto Z_0$ of $\so(4)^\Gamma$ is nonpositive, and that $Z_0 = 0$ if and only if $X$ is a Killing field. In this case, an asymptotic expansion of the equation $\cL_X g = 0$ immediately tells us that $\cL_{X_0}h_0 = 0$.

\appendix

\section{Computer code for the proof of Proposition \ref{p:classify}}
\label{app:code}

We ran the following script in Maxima 5.27.0 \cite{Max} to find the dimension of the kernel of the linear map $\mathbf{H}$ defined in \eqref{e:def_H}. This was one of the key technical steps of this paper.

The script represents $\mathbf{H}$ as a matrix $H$ with respect to natural bases of $\Sym^2\R^4 \otimes \Sym^2_0\R^4$ (with $10 \times 9 = 90$ elements) and of $\R^4 \otimes \Sym^3\R^4$ (with $4 \times 20 = 80$ elements). Our preferred basis of the domain consists of tensors $h \otimes q$, where $h$ runs over the standard basis of symmetric $4\times 4$ matrices (the loop over $m$ in the script) and $q$ runs over the first $9$ elements of the same basis made trace-free (the loop over $n$ in the script). To compute the ($9(m-1)+n$)-th column of $H$ according to \eqref{e:def_H}, we first convert $q$ into a harmonic quadratic polynomial \emph{qpol} in the variables $x_1,x_2,x_3,x_4$. Then for all $k \in \{1,2,3,4\}$, the $k$-th component of the $1$-form $r^8\cB_{g_0}(\frac{\textit{qpol}}{r^6}h)$ is a cubic polynomial \emph{cubic} in the same variables, and for $\ell \in \{1,2,3,\ldots,20\}$ we store the coefficient of the $\ell$-th cubic basis monomial in \emph{cubic} in row number $20(k-1)+\ell$ (and column number $9(m-1)+n$) of the matrix $H$.

The procedure to read out the coefficients of the basis monomials in \emph{cubic} is a little convoluted due to the following problem. If $\textit{cubic} = 10x_1x_3x_4 - 8x_2x_3^2$, then $\textit{coeff}(\textit{coeff}(\textit{coeff}(\textit{cubic},x_1),x_3),x_4)$ returns $10$, but $\textit{coeff}(\textit{coeff}(\textit{coeff}(\textit{cubic},x_2),x_3),x_3)$ returns $0$ by default because $-8x_3^2$ is nonlinear in $x_3$, so we must instead use $\textit{coeff}(\textit{coeff}(\textit{cubic},x_2),x_3^2)$ to get the coefficient $-8$.

\vskip3mm

\noindent load(linearalgebra)\$

\noindent load(functs)\$

\noindent X: matrix([x1,x2,x3,x4])\$

\noindent R: x1\^{}2+x2\^{}2+x3\^{}2+x4\^{}2\$

\noindent A: matrix([1,2],[1,3],[1,4],[2,3],[2,4],[3,4],[1,1],[2,2],[3,3],[4,4])\$

\noindent B: matrix([1,1,1],[1,1,2],[1,1,3],[1,1,4],[1,2,2],[1,2,3],[1,2,4],[1,3,3],[1,3,4],[1,4,4],

\noindent \hskip17.8mm [2,2,2],[2,2,3],[2,2,4],[2,3,3],[2,3,4],[2,4,4],[3,3,3],[3,3,4],[3,4,4],[4,4,4])\$

\noindent H: zeromatrix(80,90)\$

\noindent for m: 1 thru 10 do(

\noindent \hskip7mm h: zeromatrix(4,4),

\noindent \hskip7mm h[A[m,1],A[m,2]]: 1,

\noindent \hskip7mm h[A[m,2],A[m,1]]: 1,

\noindent \hskip7mm for n: 1 thru 9 do(

\noindent \hskip14mm qpre: zeromatrix(4,4),

\noindent \hskip14mm qpre[A[n,1],A[n,2]]: 1,

\noindent \hskip14mm qpre[A[n,2],A[n,1]]: 1,

\noindent \hskip14mm q: qpre-(1/4)*tracematrix(qpre)*ident(4),

\noindent \hskip14mm qpol: sum(sum(q[u,v]*X[1,u]*X[1,v],u,1,4),v,1,4),

\noindent \hskip14mm for k: 1 thru 4 do(

\noindent \hskip21mm cubic: expand(ratsimp(R\^{}4*(sum(h[p,k]*diff(qpol/R\^{}3,X[1,p]),p,1,4)

\noindent \hskip32.5mm  -(1/2)*tracematrix(h)*diff(qpol/R\^{}3,X[1,k])))),
			
\noindent \hskip21mm  for l: 1 thru 20 do(

\noindent \hskip28mm if (B[l,1]$<$B[l,2])

\noindent \hskip35mm then

\noindent \hskip42mm (if (B[l,2]$<$B[l,3])

\noindent \hskip49mm then 

\noindent \hskip56mm (H[20*(k-1)+l,9*(m-1)+n]:

\noindent \hskip58mm coeff(coeff(coeff(cubic,X[1,B[l,1]]),X[1,B[l,2]]),X[1,B[l,3]]))

\noindent \hskip49mm else

\noindent \hskip56mm (H[20*(k-1)+l,9*(m-1)+n]:

\noindent \hskip58mm coeff(coeff(cubic,X[1,B[l,1]]),X[1,B[l,2]]\^{}2))

\noindent \hskip42mm )

\noindent \hskip35mm else

\noindent \hskip42mm (if (B[l,2]$<$B[l,3])

\noindent \hskip49mm then 

\noindent \hskip56mm (H[20*(k-1)+l,9*(m-1)+n]:

\noindent \hskip58mm coeff(coeff(cubic,X[1,B[l,1]]\^{}2),X[1,B[l,3]]))

\noindent \hskip49mm else

\noindent \hskip56mm (H[20*(k-1)+l,9*(m-1)+n]:

\noindent \hskip58mm coeff(cubic,X[1,B[l,1]]\^{}3))
							
\noindent \hskip42mm )

\noindent \hskip21mm )

\noindent \hskip14mm )

\noindent \hskip7mm )

\noindent )\$

\noindent nullity(H);

\bibliographystyle{amsplain}
\bibliography{maxvol7}

\providecommand{\bysame}{\leavevmode\hbox to3em{\hrulefill}\thinspace}
\providecommand{\MR}{\relax\ifhmode\unskip\space\fi MR }
\providecommand{\MRhref}[2]{%
  \href{http://www.ams.org/mathscinet-getitem?mr=#1}{#2}
}
\providecommand{\href}[2]{#2}
\begin{thebibliography}{10}

\bibitem{Max}
\emph{The {M}axima {C}omputer {A}lgebra {S}ystem},
  \url{https://sourceforge.net/projects/maxima/}.

\bibitem{And01}
M.T. Anderson, \emph{{$L^2$} curvature and volume renormalization of {AHE}
  metrics on 4-manifolds}, Math. Res. Lett. \textbf{8} (2001), 171--188.

\bibitem{ADM61}
R.~Arnowitt, S.~Deser, and C.W. Misner, \emph{Coordinate invariance and energy
  expressions in general relativity}, Phys. Rev. (2) \textbf{122} (1961),
  997--1006.

\bibitem{Auv18}
H.~Auvray, \emph{From {ALE} to {ALF} gravitational instantons}, Compos. Math.
  \textbf{154} (2018), 1159--1221.

\bibitem{BanKasNak89}
S.~Bando, A.~Kasue, and H.~Nakajima, \emph{On a construction of coordinates at
  infinity on manifolds with fast curvature decay and maximal volume growth},
  Invent. Math. \textbf{97} (1989), 313--349.

\bibitem{BreCho14}
S.~Brendle and O.~Chodosh, \emph{A volume comparison theorem for asymptotically
  hyperbolic manifolds}, Comm. Math. Phys. \textbf{332} (2014), 839--846.

\bibitem{Cal79}
E.~Calabi, \emph{M\'etriques k\"ahl\'eriennes et fibr\'es holomorphes}, Ann.
  Sci. \'Ecole Norm. Sup. \textbf{12} (1979), 269--294.

\bibitem{CheNab15}
J.~Cheeger and A.~Naber, \emph{Regularity of {E}instein manifolds and the
  codimension $4$ conjecture}, Ann. of Math. \textbf{182} (2015), 1093--1165.

\bibitem{CheTia94}
J.~Cheeger and G.~Tian, \emph{On the cone structure at infinity of {R}icci-flat
  manifolds with {E}uclidean volume growth and quadratic curvature decay},
  Invent. Math. \textbf{118} (1994), 493--571.

\bibitem{Che18}
Y.~Chen, \emph{On expansions of {R}icci flat {ALE} metrics in harmonic
  coordinates about the infinity}, Comm. Math. Stat. \textbf{8} (2020), 63--90.

\bibitem{CECY21}
O.~Chodosh, M.~Eichmair, Y.~Shi, and H.~Yu, \emph{Isoperimetry, scalar
  curvature, and mass in asymptotically flat {R}iemannian 3-manifolds}, Comm.
  Pure Appl. Math. \textbf{74} (2021), 865--905.

\bibitem{ChoEicVol17}
O.~Chodosh, M.~Eichmair, and A.~Volkmann, \emph{Isoperimetric structure of
  asymptotically conical manifolds}, J. Differ. Geom. \textbf{105} (2017),
  1--19.

\bibitem{EguHan78}
T.~Eguchi and A.J. Hanson, \emph{Self-dual solutions to {E}uclidean gravity},
  Ann. Physics \textbf{120} (1979), 82--106.

\bibitem{FanShiTam09}
X.-Q. Fan, Y.~Shi, and L.-F. Tam, \emph{Large-sphere and small-sphere limits of
  the {B}rown-{Y}ork mass}, Comm. Anal. Geom. \textbf{17} (2009), 37--72.

\bibitem{Gra99}
C.R. Graham, \emph{Volume and area renormalizations for conformally compact
  {E}instein metrics}, The {P}roceedings of the 19th {W}inter {S}chool
  ``{G}eometry and {P}hysics'' ({S}rn\'{\i}, 1999), Rend. Circ. Mat. Palermo
  (2) Suppl. 63, Circ. Mat. Palermo, 2000, pp.~31--42.

\bibitem{HuJiShi16}
X.~Hu, D.~Ji, and Y.~Shi, \emph{Volume comparison of conformally compact
  manifolds with scalar curvature {$R\geq-n(n-1)$}}, Ann. Henri Poincar\'{e}
  \textbf{17} (2016), 953--977.

\bibitem{Hui06}
G.~Huisken, \emph{An isoperimetric concept for mass and quasilocal mass},
  Report 2/2006--Mathematical aspects of general relativity, Oberwolfach Rep.
  3, European Mathematical Society, Zürich, 2006, pp.~87--88.

\bibitem{JauLee19}
J.L. Jauregui and D.A. Lee, \emph{Lower semicontinuity of mass under {$C^0$}
  convergence and {H}uisken's isoperimetric mass}, J. Reine Angew. Math.
  \textbf{756} (2019), 227--257.

\bibitem{Kro89a}
P.B. Kronheimer, \emph{The construction of {ALE} spaces as hyper-{K}\"ahler
  quotients}, J. Differ. Geom. \textbf{29} (1989), 665--683.

\bibitem{Kro89b}
\bysame, \emph{A {T}orelli-type theorem for gravitational instantons}, J.
  Differ. Geom. \textbf{29} (1989), 685--697.

\bibitem{LocVia16}
M.T. Lock and J.A. Viaclovsky, \emph{Quotient singularities, eta invariants,
  and self-dual metrics}, Geom. Topol. \textbf{20} (2016), 1773--1806.

\bibitem{Nak90}
H.~Nakajima, \emph{Self-duality of {ALE} {R}icci-flat {$4$}-manifolds and
  positive mass theorem}, Recent topics in differential and analytic geometry,
  Adv. Stud. Pure Math., vol.~18, Academic Press, Boston, MA, 1990,
  pp.~385--396.

\bibitem{Ozu19}
T.~Ozuch, \emph{Noncollapsed degeneration of {E}instein $4$-manifolds {II}},
  preprint (2019), arXiv:1909.12960.

\bibitem{Ros87}
A.~Ros, \emph{Compact hypersurfaces with constant higher order mean
  curvatures}, Rev. Mat. Iberoamericana \textbf{3} (1987), 447--453.

\bibitem{SY79}
R.~Schoen and S.-T. Yau, \emph{On the proof of the positive mass conjecture in
  general relativity}, Comm. Math. Phys. \textbf{65} (1979), 45--76.

\bibitem{Suv12}
I.~{\c S}uvaina, \emph{{ALE} {R}icci-flat {K}\"ahler metrics and deformations
  of quotient surface singularities}, Ann. Global Anal. Geom. \textbf{41}
  (2012), 109--123.

\bibitem{Wit81}
E.~Witten, \emph{A new proof of the positive energy theorem}, Comm. Math. Phys.
  \textbf{80} (1981), 381--402.

\bibitem{Wri12}
E.P. Wright, \emph{Quotients of gravitational instantons}, Ann. Global Anal.
  Geom. \textbf{41} (2012), 91--108.

\end{thebibliography}

\end{document}